\documentclass[twoside,12pt,reqno]{amsart}
\usepackage{a4wide}
\usepackage[utf8]{inputenc}
\usepackage[T1]{fontenc}
\usepackage[english]{babel}
\usepackage{tgpagella}
\usepackage{amscd} 
\usepackage{csquotes}
\usepackage{fancyhdr}
\usepackage{multirow}
\usepackage{pgf,tikz}
\usepackage{tikz-cd,rotating}
\usetikzlibrary{arrows}
\usepackage{amsthm, amsfonts, amssymb, amsmath, latexsym, enumerate, array, color}
\usepackage[all]{xy}
\usepackage{pstricks,graphicx}
\usepackage{stmaryrd}
\usepackage{hyperref,mdwlist,mathrsfs}
\usepackage{paralist}
\usepackage{cleveref}

\newtheorem{theorem}{Theorem}[section]
\newtheorem{lemma}[theorem]{Lemma}
\newtheorem{question}[theorem]{Question}
\newtheorem{proposition}[theorem]{Proposition}
\newtheorem{corollary}[theorem]{Corollary}
 
\theoremstyle{definition}
\newtheorem{definition}[theorem]{Definition}

\newtheorem{remark}[theorem]{Remark}

\numberwithin{equation}{section}

\newcommand{\PP}{\ensuremath{\mathbb{P}}}
\DeclareMathOperator{\Soc}{Soc}

\newcommand{\fb}{\mathfrak{b}}

\title[Lefschetz Properties of complete intersesctions and  maximal variation]{ Lefschetz Properties of complete intersesctions and maximal variation of hyperplane sections}

\author[]{Mats Boij}
\address{Mats Boij: Department of Mathematics, KTH Royal Institute of Technology, 100 44 Stockholm, Sweden}
\email{boij@kth.se}

\author[]{Juan C. Migliore}
\address{Juan C. Migliore: Department of Mathematics, University of Notre  Dame, Notre Dame, IN 46556, USA}
\email{migliore.1@nd.edu}

\author[]{Rosa M.\ Mir\'o-Roig}
\address{Rosa Maria Mir\'o-Roig: Facultat de
Matem\`atiques i Inform\`atica, Universitat de Barcelona, Gran Via des les Corts Catalanes 585, 08007 Barcelona, Spain}
\email{miro@ub.edu,  ORCID 0000-0003-1375-6547}

\author[]{Uwe Nagel} 
\address{Uwe Nagel: Department of Mathematics, University of Kentucky, 715 Patterson Office Tower, Lexington, KY 40506-0027, USA}
\email{uwe.nagel@uky.edu}

\thanks{\hspace{-15pt} Boij was partially supported by the Swedish Research Council grant VR2024-04853,}
  
\thanks{\hspace{-15pt} Migliore was partially supported by Simons Foundation grant \#839618,}
  
\thanks{\hspace{-15pt}   Mir\'o-Roig was partially supported by the grant PID2024-157142NB-I00,  
}

\thanks{\hspace{-15pt} Nagel was partially supported by Simons Foundation grant \#636513}

\thanks{
This paper began as a Research in Pairs program at the CIRM in Trento in March, 2025 and was finished as a Research in Residence at the CIRM in Luminy in July 2026. 
The authors are very  grateful to both CIRMs for providing the opportunity and wonderful settings for this collaboration. }

\begin{document}

\begin{abstract} 
For an artinian complete intersection of forms of degree $d$, we prove that the multiplication by a general linear form is injective up to degree $\frac{3d-1}2$, independently of the number of variables. This is accomplished using algebraic and geometric properties of the union of two complete intersection curves and their general hyperplane sections. As a consequence, we solve, for any $d\ge 4$, the problem about the maximal variation of the hyperplane sections of a smooth hypersurface of degree $d$ posed by A. Beauville.
\end{abstract}
\maketitle

\section{Introduction} 
\vskip 4mm

A long-standing open problem in Commutative Algebra is whether all complete intersections in characteristic zero have the so-called {\it Weak Lefschetz Property (WLP)}. More precisely, let $R = k[x_0,\dots,x_n]$, where $k$ is a field of characteristic zero, and let $I = (f_0,\dots,f_n)$ be an artinian homogeneous ideal generated by a regular sequence. Then the WLP means that there exists a linear form $\ell$ for which $\times \ell : [R/I]_{t-1} \rightarrow [R/I]_t$ has maximal rank for all $t$. This was conjectured by L. Reid, L. Roberts and M. Roitman to be true for an arbitrary complete intersection in 1991 in \cite{RRR}. When $\times \ell : [R/I]_{t-1} \rightarrow [R/I]_t$ is injective for a particular $t$, we say that WLP holds {\it in degree $t$}.

The maximal rank statement is known to be true for a {\it general} complete intersection with fixed generator degrees, thanks to work of R. Stanley \cite{S}, J. Watanabe \cite{W} and the above-cited \cite{RRR}. 

The most general result known on the WLP for arbitrary complete intersections is in codimension three, given by the following theorem by the second and fourth authors together with T. Harima and J. Watanabe .

\begin{theorem}[\cite{HMNW}]
When $n=2$, all complete intersections have the WLP.
\end{theorem}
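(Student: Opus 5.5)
The plan follows the strategy of Harima--Migliore--Nagel--Watanabe: reduce the maximal rank question to the splitting type of a syzygy bundle on $\PP^2$ and then apply the Grauert--Mülich theorem. Write $R=k[x_0,x_1,x_2]$, $I=(f_0,f_1,f_2)$ with $\deg f_i=d_i$ and $d_0\le d_1\le d_2$. Let $\ell$ be a general linear form and $S=R/(\ell)\cong k[y,z]$. The exact sequence
\[
0\to (I:\ell)/I(-1)\to R/I(-1)\xrightarrow{\times\ell} R/I\to R/(I,\ell)\to 0
\]
shows that $\times\ell$ has maximal rank in every degree if and only if the Hilbert function of $S/\bar I$ equals $\max\{\Delta h_{R/I}(t),0\}$. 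Here $\bar I=(\bar f_0,\bar f_1,\bar f_2)$ is the restriction of $I$ to the line $L=\{\ell=0\}$. Because $R/I$ is Gorenstein with a symmetric unimodal Hilbert function, it suffices to prove injectivity up to the middle degree; surjectivity beyond it then follows by duality. We may also assume $d_2< d_0+d_1-1$. Otherwise $R/I$ has the same Hilbert function as $k[x_0,x_1,x_2]/(f_0,f_1)$ truncated, and the statement reduces to the codimension-two case: a complete intersection of type $(d_0,d_1)$ in three variables restricts to $k[y,z]/(\bar f_0,\bar f_1)$, which is handled directly.

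The central object is the syzygy bundle $\mathcal{E}$ on $\PP^2$ defined by
\[
0\to \mathcal{E}\to \bigoplus_{i}\mathcal{O}(-d_i)\xrightarrow{(f_0,f_1,f_2)} \mathcal{O}\to 0 .
\]
The map on the right is surjective as a map of sheaves because $I$ is artinian. Thus $\mathcal{E}$ is a rank-two vector bundle with $c_1=-(d_0+d_1+d_2)$. Restricting to a general line $L$ gives $\mathcal{E}|_L\cong\mathcal{O}_L(a)\oplus\mathcal{O}_L(b)$ with $a+b=-(d_0+d_1+d_2)$. The graded module $H^0_*(\mathcal{E}|_L)$ is the syzygy module of $(\bar f_0,\bar f_1,\bar f_2)$ in $k[y,z]$. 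Consequently the Hilbert function of $S/\bar I$, and hence the failure or success of maximal rank, is determined exactly by the splitting type $(a,b)$.

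One then computes that the expected Hilbert function $\max\{\Delta h_{R/I},0\}$ corresponds to the most balanced splitting compatible with the generators, namely $|a-b|\le 1$. This holds in the range $d_2<d_0+d_1-1$; outside that range the splitting is forced by a trivial summand. Equivalently, WLP fails precisely when $|a-b|\ge2$.

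To exclude the unbalanced case, I first show that $\mathcal{E}$ is semistable in the range $d_2\le d_0+d_1$. A destabilizing subsheaf $\mathcal{O}(c)\hookrightarrow\mathcal{E}$ would give a syzygy of $(f_0,f_1,f_2)$ of degree $-c$, while every syzygy of a regular sequence is Koszul and so has degree at least $d_0+d_1$. This makes $2c\le c_1$ automatic, apart from the boundary cases, which have to be checked by hand. The Grauert--Mülich theorem in characteristic zero then gives $|a-b|\le1$ for a general line, since for semistable rank-two bundles the gaps between consecutive splitting degrees are at most one. This yields the maximal rank property.

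The main obstacle is the careful bookkeeping at the boundary. I need to verify precisely that $|a-b|\le1$ is equivalent to the expected Hilbert function of $S/\bar I$, including the parity cases where $c_1$ is odd. I also need to confirm that the semistability argument covers the case $d_2=d_0+d_1-1$ and the strictly semistable case, so that Grauert--Mülich still applies.
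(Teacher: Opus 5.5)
The paper gives no proof of this statement (it is quoted from \cite{HMNW}), and your proposal is a correct reconstruction of essentially the original argument there. That argument restricts the rank-two syzygy bundle of $(f_0,f_1,f_2)$ to a general line, reads off the Hilbert function of $S/\bar I$ from the splitting type, gets semistability from the fact that all syzygies of a regular sequence are Koszul, and concludes with Grauert--M\"ulich.

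The boundary issues you flag do not arise. After your reduction to $d_2\le d_0+d_1-2$, the least syzygy degree $d_0+d_1$ strictly exceeds $(d_0+d_1+d_2)/2$, so $\mathcal{E}$ is actually stable. The cases $d_2\ge d_0+d_1-1$ are settled directly: injectivity holds below degree $d_2$ because $\ell$ is regular on $R/(f_0,f_1)$, and surjectivity holds from degree $d_2$ on because $k[y,z]/(\bar f_0,\bar f_1)$ vanishes in degrees $\ge d_0+d_1-1$.
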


From now on, we assume that the complete intersection is equigenerated, i.e. all generators have the same degree $d$. For four variables, the best result so far is the following.

\begin{theorem}[\cite{BMMN-codim4}] \label{d1=d2=d3=d4-intro} 
Let $A = R/I = R/(  f_0,f_1,f_2,f_3 )$, where $I$ is a complete intersection and
  $\deg f_i = d$ for all $i$ and some fixed $d \geq 1$.  Then $A$ has the WLP in all degrees $t <  \lfloor\frac{3d+1}{2}\rfloor$. Furthermore, WLP holds for $d=3,4,5,6$.
\end{theorem}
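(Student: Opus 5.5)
The plan is to translate injectivity of $\times \ell$ into a statement about plane curves containing a configuration of points, via the geometry of two complete intersection curves in $\PP^3$. Split the generators into two pairs and put $C_1 = V(f_0,f_1)$ and $C_2 = V(f_2,f_3)$. These are complete intersection curves of degree $d^2$, so they are ACM. Since $I = I_{C_1} + I_{C_2}$ is artinian, the curves are disjoint as schemes.

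\textbf{Step 1 (reduction to lifting).} For two disjoint ACM curves, the Mayer--Vietoris sequence
\[0 \to I_{C_1}\cap I_{C_2} \to I_{C_1}\oplus I_{C_2} \to I \to 0\]
together with the vanishing of $H^1_*$ for each $C_i$ gives an isomorphism $H^1_*(\mathcal I_{C_1\cup C_2}) \cong R/(I_{C_1}+I_{C_2}) = A$. Now take a general linear form $\ell$ and let $\Gamma = \Gamma_1\cup\Gamma_2$ be the hyperplane section of $X = C_1\cup C_2$. Then $\Gamma$ is a set of $2d^2$ points in $\PP^2$, where each $\Gamma_i$ is a complete intersection of type $(d,d)$ cut by the restricted generators. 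The sequence $0\to \mathcal I_X(-1)\to \mathcal I_X \to \mathcal I_{\Gamma}\to 0$ gives an exact sequence
\[ [I_X]_t \to [I_\Gamma]_t \to A_{t-1} \xrightarrow{\times \ell} A_t .\]
Hence WLP holds in degree $t$ exactly when every form of degree $t$ vanishing on $\Gamma$ lifts to $[I_X]_t = [I_{C_1}\cap I_{C_2}]_t$.

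\textbf{Step 2 (target vanishing).} The lifting condition is automatic once $h^0(\mathcal I_\Gamma(t)) = 0$. So the goal becomes showing that no plane curve of degree $t < \lfloor\frac{3d+1}{2}\rfloor$ passes through $\Gamma_1\cup\Gamma_2$. Any such curve $D$ lies in $J_1 = (\bar f_0,\bar f_1)$, so $D = a\bar f_0 + b\bar f_1$ with $\deg a = \deg b = t-d < d$. Likewise $D = a'\bar f_2 + b'\bar f_3$.

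I would intersect $D$ with the members $V(\lambda \bar f_2 + \mu \bar f_3)$ of the pencil through $\Gamma_2$. Choose the member $E$ that contains one further general point of $D$; it then shares a component with $D$. Peel off the common component and apply the same argument to the residual curve and the residual points. The aim is to obtain a degree inequality of the shape $2t \ge 3d$, via a Bezout count on $D\cap E$ against $d^2$ points plus the residual points of $\Gamma_1$ lying on $D\cap E$. The generality of $\ell$ (Bertini on $C_1$, $C_2$) is used to make $\Gamma_1$ and $\Gamma_2$ reduced, disjoint, and in uniform position relative to the chosen components.

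\textbf{Main obstacle.} Step 2 is the hard part. The naive Bezout count only gives $t\ge d$. Obtaining the threshold $\frac{3d+1}{2}$ requires controlling how $D$ can split into components shared with members of the two pencils $\langle \bar f_0,\bar f_1\rangle$ and $\langle \bar f_2,\bar f_3\rangle$. I would first try a liaison argument: link $\Gamma_2$ through a complete intersection of type $(t,d)$ formed by $D$ and a general member of the second pencil. The residual then has degree $td - d^2$ and must contain information from $\Gamma_1$, which in turn forces the Hilbert function of the residual to be too large when $t < \frac{3d+1}{2}$.

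\textbf{Small $d$.} For $d = 3,4,5,6$, the remaining degrees up to the middle degree $2d-2$ are few. Symmetry of the Hilbert function of $A$ (socle degree $4d-4$) reduces full WLP to injectivity up to that middle degree. For these values I would handle the missing degrees by a sharper, case-by-case analysis of the possible splittings of $D$.
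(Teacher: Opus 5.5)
Your Step 1 is correct and is the same framework the paper uses (generalizing \cite{BMMN-codim4}): $M(C_1\cup C_2)\cong A$. Moreover $I_{C_1}\cap I_{C_2}=I_{C_1}I_{C_2}$ is generated in degree $2d$, so for $t<2d$ injectivity of $\times\ell\colon A_{t-1}\to A_t$ is equivalent to $[I_\Gamma]_t=0$. The gap is Step 2, which contains no argument that can work.

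\textbf{The Bezout step fails.} In the only relevant range $t>d$, a member $E$ of $\langle\bar f_2,\bar f_3\rangle$ meets $D$ in $td\ge d^2+d$ points. So $E\supseteq\Gamma_2\cup\{P\}$ does not force a common component with $D$.

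\textbf{Planar properties cannot suffice.} Let $F$ be a smooth plane curve of degree $d$ and $G_1,G_2$ general of degree $d$. Then $\Gamma_i=V(F,G_i)$ are reduced, disjoint complete intersections of type $(d,d)$, each in uniform position (the monodromy of $F\cap G$ is the full symmetric group). Yet $F\in[I_\Gamma]_d$. Your liaison sketch (``the residual must contain information from $\Gamma_1$\ldots'') uses nothing that excludes this example, so it cannot yield $2t\ge 3d$. You need an ingredient that uses the fact that $\Gamma$ is a \emph{general} hyperplane section of a curve whose Rao module is $A$.

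\textbf{The missing idea.} The paper (Section 4, read with $n=3$, so $I_{S_0}=0$) supplies this via the Huneke--Ulrich Socle Lemma (Remark~\ref{trans socle lemma}). If a form of degree $d+k$ on $\Gamma$ does not lift, the artinian reduction of $\bar R/I_\Gamma$ has socle in degree $\le d+k$. (For the configuration above, $I_\Gamma=(F,G_1G_2)$ has socle only in degree $3d-2$, which is how the lemma excludes it.) One then rules out low-degree socle algebraically:
\begin{itemize}
\item Take $g=p_0\bar f_0+p_1\bar f_1\in I_\Gamma$ of minimal degree $d+k$, and any $g'=q_0\bar f_0+q_1\bar f_1\in[I_\Gamma]_e$ with $e<2d-k$.
\item The identity $p_0g'-q_0g=(p_0q_1-q_0p_1)\bar f_1$ puts $p_0q_1-q_0p_1$ in $I_{\Gamma_2}$ in degree $<d$, so it is $0$.
\item Since $\gcd(p_0,p_1)=1$ by minimality and monodromy, $g'\in(g)$. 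Thus $I_\Gamma$ coincides with $(g)$ below degree $2d-k$.
\item Hence there is no socle before degree $2d-k-1$, which contradicts the Socle Lemma whenever $2k\le d-2$.
\end{itemize}

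\textbf{Cases still left open.} The borderline case $d$ odd, $t=\frac{3d-1}{2}$ needs a separate argument. In the paper this uses a basic double link $\bar f_1I_{Z_2}+(g)$ and a count of generators of degree $d+k+1$. For the second assertion, only $d=3$ is covered by the general range. For $d=4,5,6$ the degrees $t=6$, $t=8$, and $t=9,10$ respectively require genuine additional arguments, which ``a case-by-case analysis of splittings'' does not supply.
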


In arbitrarily  many  variables, the best result so far for WLP for equigenerated complete intersections in codimension $\geq 5$ is the following  by Beorchia and the third author.

\begin{theorem}[\cite{B-MR}, Theorem 3.4] \label{B-MR}
Fix integers $n \geq 3$ and $d \geq 2$. Let $I = (f_0, f_1,\dots, f_n) \subset R = k[x_0,\dots,x_n]$ be a complete
intersection ideal with $\deg(f_i) = d$, $0 \leq i \leq n$.
If $d-1 < t < d + \lceil \frac{d}{n} \rceil$
then $R/I$ satisfies the WLP in degree $t$.
\end{theorem}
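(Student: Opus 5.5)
The plan is to turn the injectivity question into a question about syzygies of the restrictions of the $f_i$ to a general hyperplane. Let $A=R/I$ and fix a general linear form $\ell$. Write $S=R/(\ell)\cong k[y_1,\dots,y_n]$ and $g_i=\bar f_i\in S_d$. There is an exact sequence
\[
0\to [(I:\ell)/I]_{t-1}\to A_{t-1}\xrightarrow{\times \ell} A_t\to [S/J]_t\to 0, \qquad J=(g_0,\dots,g_n),
\]
so injectivity in degree $t$ holds if and only if $\dim_k[S/J]_t=\dim A_t-\dim A_{t-1}$.

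For $d\le t<2d$ only the generators contribute to $I_t$. A direct count then gives $\dim A_t-\dim A_{t-1}=\dim S_t-(n+1)\dim S_{t-d}$. Hence WLP in degree $t$ is equivalent to $\dim J_t=(n+1)\dim S_{t-d}$. In other words, the forms $g_0,\dots,g_n$ should have no nonzero syzygy $\sum a_ig_i=0$ with $\deg a_i=s:=t-d$. The hypothesis $d-1<t<d+\lceil d/n\rceil$ means $0\le s<\lceil d/n\rceil$, so it suffices to show that $g_0,\dots,g_n$ have no syzygy of degree $s<\lceil d/n\rceil$.

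The first step is genericity. For every $i$, the forms $\{f_j\}_{j\ne i}$ cut out a finite set of points in $\PP^n$. A general $\ell$ avoids all of these points, so every $n$ of the $g_j$ form a regular sequence in $S$. In particular $J$ is artinian. Moreover, in any syzygy $(a_0,\dots,a_n)$ of degree $s<d$, each $a_i$ is nonzero modulo $(g_j)_{j\neq i}$, since these are forms of degree $d>s$. Each $a_i$ therefore lies in $\big((g_j)_{j\ne i}:g_i\big)$ in degree $s$.

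The second step uses linkage. Let $B_i=S/(g_j)_{j\ne i}$, a complete intersection with socle degree $n(d-1)$. By Gorenstein duality, $0:_{B_i}g_i\cong \operatorname{Hom}(S/J,B_i)(-d)$, which is the $k$-dual of $S/J$ shifted. Its initial degree is $n(d-1)-d-\tau+d=n(d-1)-\tau$, where $\tau$ is the top nonzero degree of $S/J$. So a syzygy of degree $s$ forces $s\ge n(d-1)-\tau$. It therefore suffices to prove
\[
\tau \le n(d-1)-\lceil d/n\rceil .
\]
This is the \emph{main obstacle}: the regularity of $n+1$ forms of degree $d$ in $n$ variables must be bounded without any genericity of the forms themselves beyond what the general $\ell$ provides.

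My first attempt at this bound would exploit the $n+1$ different linked complete intersections simultaneously. The socle of $S/J$ in top degree $\tau$ pairs nondegenerately with $B_i$ in degree $n(d-1)-\tau$, for each $i$. I would intersect these conditions with a counting argument. Alternatively, I would restrict once more to a general line and use that $n+1$ forms of degree $d$ in two variables, with any two coprime, already generate everything in degree about $d+\lceil d/n\rceil-1$, by the Hilbert function of general-enough forms on $\PP^1$. I would then lift that vanishing back along general hyperplane sections via the same exact sequence. If the lifting is too lossy, the fallback is to argue directly on a hypothetical syzygy $(a_i)$ of degree $s$. Consider the degeneracy locus of the $2\times(n+1)$ matrix with rows $(a_i)$ and $(g_i)$ in $\PP^{n-1}$. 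Its expected codimension $n$ together with a Bézout-type degree count $n\cdot s\ge d$ should give exactly the bound $s\ge\lceil d/n\rceil$.
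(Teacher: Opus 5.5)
The paper does not prove this statement; it only quotes it from \cite{B-MR}. So your argument has to stand on its own, and it does not yet. Your reduction is correct, but it is an \emph{equivalent} reformulation, and the step you call the main obstacle is the entire theorem. Two small slips first:
\begin{itemize}
\item $0:_{B_i}g_i\cong\operatorname{Hom}_{B_i}(S/J,B_i)$ carries no twist, although your initial degree $n(d-1)-\tau$ is right.
\item Before saying $a_i\notin(g_j)_{j\ne i}$ you need $a_i\neq0$. This holds because a syzygy of degree $<d$ among the regular sequence $(g_j)_{j\neq i}$ is zero.
\end{itemize}
For $s<d$, the map $(a_0,\dots,a_n)\mapsto\bar a_i$ is an isomorphism from the syzygies of degree $s$ onto $(0:_{B_i}g_i)_s\cong\big((S/J)_{n(d-1)-s}\big)^*$. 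Hence $\tau\le n(d-1)-\lceil d/n\rceil$ is exactly the original claim. It is transported by the Gorenstein symmetry of $A$, which exchanges injectivity of $\times\ell\colon A_{d+s-1}\to A_{d+s}$ with surjectivity of $\times\ell\colon A_{n(d-1)-s-1}\to A_{n(d-1)-s}$.

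None of your three strategies closes this step.
\begin{itemize}
\item The $n+1$ linked complete intersections give no extra information, since every $0:_{B_i}g_i$ is isomorphic to the same module $(S/J)^\vee(-n(d-1))$.
\item Restricting to a general line fails at the top of the range. If $n\nmid d$, any $n+1$ binary forms of degree $d$ have a nonzero syzygy of degree $\lceil d/n\rceil-1$, because $(n+1)\lceil d/n\rceil>d+\lceil d/n\rceil$. Also, vanishing of $(S/J)/\ell'(S/J)$ in some degree only says that $\times\ell'$ is surjective there, not that $S/J$ vanishes near degree $n(d-1)$.
\item In the fallback, a syzygy is the orthogonality $\sum a_ig_i=0$, not a rank condition on your $2\times(n+1)$ matrix. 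The rank-one locus of that matrix has expected codimension $n$ in $\PP^{n-1}$, so it is expected to be empty, and there is no B\'ezout count to perform.
\end{itemize}

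The missing idea is a genuinely geometric use of the generality of $\ell$. You use generality only to make every $n$ of the $g_j$ a regular sequence, and that cannot suffice. Take $I=(x_0^d,\dots,x_n^d)$ with basis $f_0=x_0^d-\sum_{i\ge1}x_i^d$, $f_i=x_i^d$, and take $\ell=x_0$. Any $n$ of the $\bar f_j$ generate $(\bar x_1^d,\dots,\bar x_n^d)$, so they form a regular sequence, yet $\sum_j\bar f_j=0$ is a syzygy of degree $0$. Similarly, for $d=1$ (excluded by hypothesis), a general $\ell$ makes every $n$ of the $\bar x_j$ regular while all $n+1$ are dependent; nothing in your sketch uses $d\ge 2$. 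So artinianness, linkage, duality and dimension counts alone cannot give the bound.

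For $s=0$ the needed input is a Bertini argument. $\PP(I_d)$ is base-point-free, so its general member is smooth, hence irreducible of degree $d\ge2$, so it has no linear factor. Thus the $[\ell]$ dividing some member of $I_d$ form a family of dimension at most $n-1<\dim\PP(R_1)$. For $s>0$ you need an analogue of this. A precise version of your heuristic $ns\ge d$ is as follows. Let $E=\ker\big(\mathcal O_H^{n+1}\xrightarrow{(g_j)}\mathcal O_H(d)\big)$, a bundle of rank $n$ with $c_1(E)=-d$. A syzygy of degree $s$ is a nonzero section of $E(s)$, and $ns\ge d$ is the inequality $\mu(E(s))\ge 0$. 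What you need is therefore a semistability-type statement for $E$ valid for general $H$. It fails in the example above, where $\mathcal O_H\subset E$.
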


This is a generalization of what was previously known from the following result by Alzati and Re. 

\begin{theorem}
Let $A = \mathbb C[x_0,\dots, x_m]/I$ be an artinian algebra, with $I$ generated
by a regular sequence of forms of degree $d$. 
\begin{enumerate}
    \item (\cite{AR}, Corollary 4)
For general $z \in A_1$ the multiplication
map $\mu_z \colon A_{d-1} \longrightarrow A_d$ is injective.
\item (\cite{AR}, Theorem 1)
If $m \leq 4$ then a complete intersection of quadrics has the WLP.\end{enumerate}
\end{theorem}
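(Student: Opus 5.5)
The plan is to prove (1) by a dimension count on reducible members of the linear system $V=\langle f_0,\dots,f_m\rangle$ combined with Bertini's theorem. Part (2) should then follow from (1), Gorenstein symmetry, and one remaining middle-degree map.

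For (1), note that $I$ has no elements of degree below $d$, so $A_{d-1}=R_{d-1}$ and $A_d=R_d/I_d$. A nonzero $g\in\ker(\mu_z)$ is a form of degree $d-1$ with $zg\in I_d$, that is, with $zg\in V$. So injectivity fails for general $z$ exactly when, for a general linear form $z$, some nonzero member of $V$ is divisible by $z$. I would consider the incidence
\[
\Sigma=\{(F,[z])\in \PP(V)\times \PP(R_1) : z \mid F\}.
\]
If $\Sigma$ dominates $\PP(R_1)\cong\PP^m$, then $\dim\Sigma\ge m$. Each nonzero $F$ has only finitely many linear factors, so the projection of $\Sigma$ to $\PP(V)\cong\PP^m$ is finite onto its image. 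Hence every element of $V$ has a linear factor.

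Next I would apply Bertini's theorem in characteristic $0$. A linear system whose general member is reducible either has a fixed component or is composite with a pencil. A fixed component is a common factor of the $f_i$, which is impossible for a regular sequence of length at least $2$. If $V$ is composite with a pencil $\langle\varphi,\psi\rangle$, each general member factors into members of the pencil. The required linear factor forces $\varphi,\psi$ to be linear, so all $f_i\in k[\varphi,\psi]$. This is impossible for a regular sequence of length $m+1\ge 3$. The case $m=1$ is the two-variable case, where WLP (indeed SLP) is classical.

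For (2), take $d=2$. Then $A$ is Gorenstein with socle degree $m+1$ and symmetric Hilbert function $h_t=\binom{m+1}{t}$. By duality, $\mu_z\colon A_{t-1}\to A_t$ is injective if and only if $\mu_z\colon A_{m+1-t}\to A_{m+2-t}$ is surjective. It therefore suffices to prove injectivity for $t\le \lceil (m+1)/2\rceil$.
\begin{itemize}
\item For $m\le 3$, the only nontrivial map needed is $A_1\to A_2$, and this is (1) with $d=2$.
\item For $m=4$, with $h=(1,5,10,10,5,1)$, I additionally need $\mu_z\colon A_2\to A_3$ to be bijective.
\end{itemize}
From $0\to\ker\to A_2\to A_3\to (A/zA)_3\to 0$ and $h_2=h_3$, bijectivity is equivalent to $(A/zA)_3=0$. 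Here $A/zA=k[x_0,\dots,x_3]/(\bar f_0,\dots,\bar f_4)$ is given by the restrictions of the quadrics to a general hyperplane $H$. The $20$ products $x_j\bar f_i$ must span the $20$-dimensional space of cubics on $H$. Equivalently, the five restricted quadrics admit no linear syzygy.

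This last step is the main obstacle. My first attempt would again be an incidence and dimension argument. Suppose every hyperplane $H$ yields a linear syzygy $\sum \ell_i\bar f_i=0$. Lifting to $R$ gives $\sum\ell_i f_i\in zR_2$, so some nonzero element of $R_1\cdot V$ is divisible by $z$ modulo higher-order relations. I would count dimensions of the family of such relations as $z$ varies and try to show that this forces either a linear syzygy among the $f_i$ themselves, which a regular sequence of quadrics cannot have, or a degenerate configuration such as all $f_i$ vanishing on a common plane or lying in an ideal generated by few linear forms. Either outcome contradicts the fact that $V(f_0,\dots,f_4)=\emptyset$ in $\PP^4$.

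If the direct count does not close, the fallback is a Green-type restriction argument or a case analysis of the possible degenerate linear syzygies of five quadrics in $\PP^3$. That case analysis is feasible only because $m=4$ is so small.
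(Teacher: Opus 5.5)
The paper does not prove this statement; it quotes it from \cite{AR}, so your proposal has to stand on its own. Your argument for (1) is sound. Part (2), however, has a genuine gap in its only nontrivial case, $m=4$.

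\textbf{The gap in (2).} Your reduction is correct. By Gorenstein duality, everything except $\mu_z\colon A_2\to A_3$ for $m=4$ follows from (1). Bijectivity of that map is equivalent to $(A/zA)_3=0$, that is, to the five restricted quadrics $\bar f_0,\dots,\bar f_4$ having no linear syzygy on a general hyperplane. But that is the whole content of \cite[Theorem 1]{AR}, and you give only a plan, not an argument.

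The plan does not transfer from (1). There the key was $I_{d-1}=0$, so any $F=zg\in V$ is a genuine kernel element. Here every $z$ divides elements of $I_3$ (e.g. $zf_i$). Hence the incidence $\{(z,[q]) : zq\in I_3\}$ always dominates $\PP(R_1)$: its trivial part, the pairs with $q\in I_2$, is already $8$-dimensional. Passing to $\PP(A_2)$ only yields an expected dimension. The real question is whether the degree-$10$ form $\det(\mu_z\colon A_2\to A_3)$ vanishes identically in $z$, and a naive dimension count cannot decide that.

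Other tools fall short as well. Green's restriction theorem only gives $\dim(A/zA)_3\le 10_{\langle 3\rangle}=4$. Nor can the conclusion come from $V(f_0,\dots,f_4)=\emptyset$ alone. The forms $x_0^2,\dots,x_3^2,\,x_0x_1+x_4^2$ form a complete intersection, yet on the hyperplane $x_4=0$ they restrict to $x_0^2,\dots,x_3^2,x_0x_1$. These satisfy the linear syzygy $x_1\cdot x_0^2-x_0\cdot x_0x_1=0$. So any proof must use the generality of the hyperplane in an essential way. As written, (2) is established only for $m\le 3$.

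\textbf{Remarks on (1).} The incidence count and the Bertini step are correct. The latter can be shortened: since $V(f_0,\dots,f_m)=\emptyset$, the system $|V|$ is base point free, so its general member is smooth. On the other hand, $zg$ with $\deg g\ge 1$ is singular along $V(z)\cap V(g)$, which is nonempty when $m\ge 2$.

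Note that (1) tacitly needs $m\ge 2$ and $d\ge 2$. Your remark that $m=1$ is covered by two-variable WLP is wrong. There $\dim A_{d-1}=d>d-1=\dim A_d$, so maximal rank means surjectivity, and the injectivity claim is simply false. In (2), the cases $m\le 1$ follow from injectivity of $A_0\to A_1$ plus duality, not from (1).
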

    
    The main result of this paper is the following.
    
    \begin{theorem} \label{arb num var}
  Let $R/I$ be an equigenerated complete intersection in $k[x_0,\dots,x_n]$, $n \geq 4$, of forms of degree $d \geq 2$.  Then $R/I$ satisfies the WLP in degrees $t\le\frac{3d-1}2$.
\end{theorem}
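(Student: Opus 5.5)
The abstract points to the strategy: realize the failure of injectivity as a geometric statement about curves and their hyperplane sections. Let $\ell$ be a general linear form and $A=R/I$. Injectivity of $\times\ell\colon A_{t-1}\to A_t$ is equivalent to $(I:\ell)_{t-1}=I_{t-1}$, that is, $[(I+(\ell))/I]_t \cong [R/I]_{t-1}$, or dually to the vanishing of $\operatorname{Tor}$-type obstructions. By Macaulay duality and the self-duality of the complete intersection (socle degree $e=(n+1)(d-1)$), this becomes a surjectivity statement in complementary degrees.

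To reduce the number of variables, I would split the $n+1$ generators. Take a general $n-1$ of them (after a general change of generators within $I_d$, which preserves the complete intersection). These define a complete intersection curve $C$ in $\mathbb P^n$. The remaining two forms $f,g$ cut $C$ in a zero-dimensional scheme. The key device is to write the relevant kernel element via a link: if $\ell F\in I$ with $F\notin I$ in degree $t-1\le\frac{3d-3}{2}$, then the elements of degree $<2d$ in $I$ have the form $af+bg$ modulo $J=(f_0,\dots,f_{n-2})$. Restricting to the hyperplane $H=\{\ell=0\}$ converts this into a statement about the hyperplane section $C\cap H$, a set of $d^{n-1}$ general points of $C$, together with a second complete intersection curve $C'$ defined by $J'$, a different choice of $n-1$ general combinations. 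I would then study the union $X=C\cup C'$. Its ideal is $J\cap J'$, and it is linked (via a complete intersection of type $(d,\dots,d)$ containing both) in a controlled way, so $h$-vectors and Hilbert functions of $X$ and of $X\cap H$ are computable by liaison: $C$ and $C'$ share $n-2$ generators, so $X$ is defined by those plus a product-type form.

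The main step is to show that the general hyperplane section $X\cap H$ imposes independent conditions, or has the expected Hilbert function, in degrees up to roughly $\frac{3d-1}{2}$. Equivalently, $R/(J\cap J')$ has depth such that $\ell$ is a nonzerodivisor, and the restriction $[R/I_X]_{s}\to[R/(I_X+\ell)]_s$ is surjective with expected kernel. Then a kernel element $F$ would produce a form of degree $t-1$ on $H$ vanishing on a subscheme of $X\cap H$ with too many conditions, contradicting the count. I expect this to be the main obstacle. Curves are arithmetically Cohen–Macaulay, so restriction behaves well for $C$ and $C'$ separately. For the union, however, one must control $J+J'$ (the intersection $C\cap C'$, itself a complete intersection of $n$ forms, hence points) and show that the relevant degree bound $t\le (3d-1)/2$ comes exactly from comparing $\deg$ of the forms to half the degree where the link produces obstructions. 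I would first try the exact sequence
\[0\to R/(J\cap J')\to R/J\oplus R/J'\to R/(J+J')\to 0,\]
restrict it to $H$, and use the fact that $R/(J+J')$ is a one-dimensional CM ring on which $\ell$ is regular. This makes $X$ ACM, and the Hilbert function bound follows by comparing with $R/I$ in degrees $\le\frac{3d-1}{2}$.

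Finally, I would handle the residual cases. Small $d$ (say $d=2,3$) falls within the range of the Alzati–Re result and Theorem \ref{B-MR}, since there $d+\lceil d/n\rceil$ already covers $\frac{3d-1}{2}$ or is close to it. Degrees $t\le d-1$ are trivial. I would then check that the hypothesis $n\ge4$ enters only to guarantee that the curves and their sections are general enough, via Bertini, for the liaison computation.
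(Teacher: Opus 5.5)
\textbf{There is a genuine gap in your proposal.} Nothing in your construction connects a kernel element of $\times\ell\colon A_{t-1}\to A_t$ to the geometry of a hyperplane section. Your union $X=C\cup C'$ of two complete intersection curves sharing $n-2$ generators is, as you note yourself, ACM. In fact it is a complete intersection of type $(d^{n-2},2d)$. Consequently:
\begin{itemize}
\item $H^1_*(\mathcal I_X)=0$;
\item every form on $H$ vanishing on $X\cap H$ lifts to $I_X$;
\item the Hilbert function of $X\cap H$ is just the first difference of that of $X$.
\end{itemize}
So there is no ``count'' that a kernel element could violate, and $A$ does not appear in the cohomology of $X$ at all. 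Your sentence saying a kernel element $F$ ``would produce a form of degree $t-1$ on $H$ vanishing on a subscheme of $X\cap H$ with too many conditions'' is therefore not a step but the whole content of the theorem. You give no mechanism for it, and no reason why the threshold should be $\frac{3d-1}{2}$. Also, the two remaining forms together cut $C$ in the empty set, not in a zero-dimensional scheme, since $I$ is artinian.

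The missing idea is to build a curve whose Hartshorne--Rao module \emph{is} $A$. The paper takes two disjoint complete intersection curves on the threefold $V(f_4,\dots,f_n)$:
\[
C_1=V(f_0,f_1,f_4,\dots,f_n), \qquad C_2=V(f_2,f_3,f_4,\dots,f_n),
\]
which share only $n-3$ generators. Then $C=C_1\cup C_2$ is not ACM and $M(C)\cong A$. The sequence \eqref{std exact} identifies failure of injectivity in degree $t=d+k$ with a form $g\in[I_Z]_t$ that does not lift to $I_C$. The bound then comes from two ingredients.
\begin{itemize}
\item An explicit computation in $I_{Z_1}\cap I_{Z_2}$: a determinant $p_0q_1-q_0p_1$ of degree $<d$ vanishing on $Z_2$ must be zero. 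Together with minimality of $g$, this gives $[I_Z]_e=[(g,\overline f_4,\dots,\overline f_n)]_e$ for $e<2d-k$. Hence the artinian reduction of $\overline R/I_Z$ has no socle before degree $2d-k-1$.
\item The Huneke--Ulrich Socle Lemma, which forces socle in degree $\le d+k$ whenever $g$ does not lift.
\end{itemize}
These two conclusions are incompatible exactly when $2k\le d-2$.

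The borderline case, $d$ odd and $t=\frac{3d-1}{2}$, needs a further argument. The paper uses a basic double link to show that the number $r$ of degree-$(d+k+1)$ generators satisfies $r\le 2$, then rules out $r=2$ and $r=1$, the latter via Cayley--Bacharach. Your plan to cover small $d$ by earlier results does not handle this case either: for $d=3$ and $n\ge 4$, Theorem~\ref{B-MR} and \cite{AR} only give $t\le 3$, not $t=4$.
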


This is essentially the same statement as Theorem \ref{d1=d2=d3=d4-intro}, but we note that to prove the {\it full} WLP in four variables we only need to show injectivity only up to degree $2d-2$, while in $n+1$ variables we need it up to degree $\frac{(n+1)(d-1)}{2}$. Nevertheless, it is an improvement over the other previous results. For instance, if $d=20$ and $n=10$, Theorem \ref{B-MR} gives injectivity only for $t \leq 21$ while Theorem \ref{arb num var} gives it for $t \leq 29$.

We consider two applications. A special category of equigenerated artinian complete intersections are the Jacobian ideals of smooth hypersurfaces. Ilardi \cite{I}  asked whether at least these complete intersections have the WLP. 
For smooth hypersurfaces $X = V(f)\subseteq \mathbb P^n$ the following is known about the Lefschetz properties of the Jacobian ring $R/J(X)$:
\begin{itemize}
    \item If $d\ge n+1$ then $R/J(X)$ satisfies the WLP in degree $d$  (\cite{B-P}, Corollary 4.7).
    \item If $n=3$ and $3\le d\le 6$,  then $R/J(X)$ has the WLP (\cite[Corollary 7.2]{BMMN-codim4}).
    \item If $n=3$,  then $R/J(X)$ has the WLP in all degrees degree $t<  \lfloor\frac{3d}{2}  \rfloor -1$ (\cite[Corollary 7.3]{BMMN-codim4}).
    \item
     $R/J(X)$ has the WLP in all degrees  $t<  d-1+\lceil \frac{d-1}{n}\rceil$
(See \cite[Proposition 3.1]{I} for $t=d-1$ and 
\cite[Theorem 4.3]{B-MR} for all degrees  $t\le  d-1+\frac{d-1}{n}$).
\end{itemize}
    
    As a consequence of Theorem~\ref{arb num var}, we can extend these results as follows.
    
    \begin{corollary}
    Let  $ X \subset \PP^n$ be a smooth hypersurface of degree $d\ge 2$  defined by a
homogeneous polynomial $f$. Then $R/J(X)$ has the WLP in all degrees $\leq \frac{3d}{2}-2$.
    
    \end{corollary}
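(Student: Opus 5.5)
The corollary should follow directly from Theorem~\ref{arb num var} applied to the Jacobian ideal, with the low-dimensional cases handled by the earlier results listed above. Let $X=V(f)\subset\PP^n$ be smooth of degree $d$. In characteristic zero, smoothness of $X$ means that the partial derivatives $\partial f/\partial x_0,\dots,\partial f/\partial x_n$ have no common zero in $\PP^n$, by the Euler relation. Hence $J(X)=(\partial_0 f,\dots,\partial_n f)$ is generated by $n+1$ forms in $n+1$ variables with zero-dimensional (empty) projective vanishing locus, so it is an artinian complete intersection. Moreover it is equigenerated in degree $d-1$.

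For $n\ge 4$ and $d-1\ge 2$, I apply Theorem~\ref{arb num var} with $d$ replaced by $d-1$. It gives the WLP in degrees
\[
t\le \frac{3(d-1)-1}{2}=\frac{3d}{2}-2,
\]
which is exactly the claimed bound.

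The remaining cases are handled separately.
\begin{itemize}
\item For $d=2$, $J(X)$ is generated by linear forms, so $R/J(X)=k$ and there is nothing to prove.
\item For $n=2$, the theorem of \cite{HMNW} gives the full WLP for every complete intersection in three variables.
\item For $n=3$, Theorem~\ref{d1=d2=d3=d4-intro} applied with generator degree $d-1$ gives injectivity for $t<\lfloor\frac{3d-2}{2}\rfloor$. Equivalently, the Jacobian statement \cite[Corollary 7.3]{BMMN-codim4} gives $t<\lfloor\frac{3d}{2}\rfloor-1$. Since $t$ is an integer, $t\le\frac{3d}{2}-2$ implies $t\le\lfloor\frac{3d}{2}\rfloor-2<\lfloor\frac{3d}{2}\rfloor-1$, so this case is covered.
\item For $n=1$, $R/J(X)$ is a complete intersection in two variables, which always has the WLP (indeed the SLP) in characteristic zero.
\end{itemize}

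I expect no real obstacle. The only points needing care are the integer bookkeeping with floors in the $n=3$ case and the justification that smoothness makes the partials a regular sequence. The latter uses characteristic zero (through the Euler relation), which is the standing assumption on $k$.
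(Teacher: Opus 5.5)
Your proposal is correct and follows the same route as the paper, which obtains the corollary directly from Theorem~\ref{arb num var} applied to the Jacobian ideal with $d$ replaced by $d-1$, so that $\frac{3(d-1)-1}{2}=\frac{3d}{2}-2$. Your separate handling of $d=2$ and of $n\le 3$ (via \cite{HMNW} and \cite{BMMN-codim4}) supplies cases the paper leaves implicit, since Theorem~\ref{arb num var} is stated only for $n\ge 4$ while the corollary is not restricted in $n$.
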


As a further application, recently Beauville \cite{beauville2} applied knowledge of the WLP to the question of maximal variation of the hyperplane sections of a smooth hypersurface. (See section \ref{sect: applications} for details.) He proved:

\begin{proposition} \label{criterion} Let $X\subset \PP^n$ be a smooth hypersurface of degree $d\ge 3$. Assume $n\ge 3$, and $d\ge 4$ if $n=3$. The hyperplane sections of $X$ have maximal variation if and only if the Jacobian ideal of $X$ satisfies the WLP in degree $d$.
\end{proposition}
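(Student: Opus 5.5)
The plan is to translate maximal variation into a statement about the differential of the period-type map
\[
\Phi\colon U\subset (\PP^n)^\vee \longrightarrow \mathcal M,\qquad H\mapsto [X\cap H],
\]
where $\mathcal M$ is the moduli space of smooth degree $d$ hypersurfaces in $\PP^{n-1}$ and $U$ is the open set of hyperplanes with $X\cap H$ smooth. By definition, maximal variation means $\Phi$ is generically finite, i.e.\ $d\Phi$ is injective at a general $H$. The hypotheses $n\ge 3$, and $d\ge 4$ when $n=3$, are what make the following standard identification valid: $Y=X\cap H$ is a smooth hypersurface of dimension $\ge 1$ that is neither a plane conic nor a plane cubic. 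For such $Y$, the tangent space to $\mathcal M$ at $[Y]$ (deformations of $Y$ as a hypersurface, modulo projective equivalence) is $[S/J(g)]_d$. Here $S=k[x_1,\dots,x_n]$ and $g=f(0,x_1,\dots,x_n)$, after choosing coordinates with $H=\{x_0=0\}$. I will take this identification as known (Griffiths/Kodaira–Spencer); in this step I only need to check that the hypotheses exclude the small cases.

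Next I compute $d\Phi$ at $H$. A tangent vector to $(\PP^n)^\vee$ at $H$ is a linear form $m\in S_1$, giving the family $H_t=\{x_0=tm\}$. Parametrizing $H_t$ by $x_1,\dots,x_n$, the section is $f(tm,x_1,\dots,x_n)=g+t\,m\,(\partial_0 f)|_H+O(t^2)$. Hence
\[
d\Phi_H(m)=m\cdot(\partial_0 f)|_H \in [S/J(g)]_d .
\]
Note that $J(g)=(\partial_1 f|_H,\dots,\partial_n f|_H)$, so $S/J(g)\cong R/(x_0,\partial_1 f,\dots,\partial_n f)$. Therefore $d\Phi_H$ is injective if and only if there is no nonzero $m\in S_1$ with
\[
m\,\partial_0 f\in (x_0,\partial_1f,\dots,\partial_nf)_d .
\]

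The core step is to show that this condition is equivalent to injectivity of $\times x_0\colon [R/J(f)]_{d-1}\to[R/J(f)]_d$. The tool throughout is that $J(f)$ and $J(g)$ are complete intersections generated in degree $d-1\ge 2$, because $X$ and $Y$ are smooth. Consequently neither ideal has a nonzero linear syzygy, since Koszul syzygies have degree $d-1$.

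For one direction, suppose a nonzero $m$ gives a relation $m\partial_0 f-\sum_{i\ge1}L_i\partial_i f=x_0b$ with $L_i$ linear and $b\in R_{d-1}$. Then $x_0b\in J(f)$. If $b$ were in $J(f)$, say $b=\sum\mu_i\partial_i f$ with $\mu_i$ constants, then uniqueness of the coefficients (no linear syzygies) forces $m=\mu_0x_0$. Restricting to $H$ gives $m=0$, a contradiction. So $b$ is a nonzero element of the kernel of $\times x_0$.

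Conversely, suppose $b\notin J(f)$ and $x_0b=\sum_{i=0}^n L_i\partial_i f$. If $L_0|_H=0$, then $L_0=\lambda x_0$. Restricting to $H$ gives $\sum_{i\ge1}L_i|_H\,\partial_i g=0$, which is a linear syzygy of $J(g)$. Hence every $L_i|_H=0$, so $L_i=\mu_i x_0$, and therefore $b\in J(f)$, a contradiction. Thus $m=L_0|_H\ne0$ lies in $\ker d\Phi_H$.

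Finally, a general hyperplane $H$ corresponds to a general linear form $x_0$, so generic injectivity of $d\Phi$ is exactly the WLP of $R/J(X)$ in degree $d$. The main obstacle is justifying the identification of $T_{[Y]}\mathcal M$ with $[S/J(g)]_d$ and the resulting meaning of maximal variation in the boundary cases allowed by the hypotheses. These cases are $n=3$ with $d\ge4$ (plane curves, where one passes through $H^1(T_Y)$), and $n=4$, $d=4$ (quartic K3 surfaces, where one restricts to the polarized, hypersurface-type deformations). The algebraic equivalence itself is the routine syzygy argument above.
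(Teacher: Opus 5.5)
The paper does not prove this proposition; it quotes it from Beauville \cite{beauville2}, so your argument has to stand on its own. Its core is sound. The formula $d\Phi_H(m)=m\,(\partial_0 f)|_H$ in $[S/J(g)]_d$ is right, and so are both directions of your syzygy argument, which only use that $J(f)$ and $J(g)$ are complete intersections generated in degree $d-1\ge 2$ and hence have no linear syzygies. Together they correctly show that for every hyperplane $H=\{x_0=0\}$ not tangent to $X$, $d\Phi_H$ is injective if and only if $\times x_0\colon [R/J(f)]_{d-1}\to[R/J(f)]_d$ is injective.

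The gap is in your last sentence, where you equate this injectivity with the WLP in degree $d$. The WLP in degree $d$ means maximal rank. That coincides with injectivity only when $\dim[R/J(f)]_{d-1}\le\dim[R/J(f)]_d$. You never check this, and it is exactly where the hypotheses ($n\ge3$, and $d\ge4$ if $n=3$) enter. From the Koszul resolution one gets, for $d\ge3$,
\[
\dim[R/J(f)]_d-\dim[R/J(f)]_{d-1}=\binom{n+d-1}{n-1}-n(n+1)=\dim[S/J(g)]_d-n .
\]
This is nonnegative precisely when $n\ge4$, or when $n=3$ and $d\ge4$, with equality for cubic threefolds.

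In the excluded case $n=3$, $d=3$ one has $\dim[R/J]_2=6>4=\dim[R/J]_3$. So the WLP in degree $3$ is a surjectivity condition. It holds, e.g., for the Fermat cubic surface, whose Jacobian ring is $R/(x_0^2,\dots,x_3^2)$. Yet maximal variation is impossible there, since plane cubics have $1$-dimensional moduli. Without this verification the claimed equivalence is simply false.

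By contrast, the reason you give for the hypotheses is mistaken. With $\mathcal M_d=\PP(H^0(\mathcal O_{\PP^{n-1}}(d)))/SL_n$ as in the paper, the differential lands in $S_d/J(g)_d$ for every smooth $Y$ of degree $d\ge3$, plane cubics included, because their projective stabilizers are finite. No passage through $H^1(T_Y)$ or through polarized K3 deformations is needed. If you want to avoid the singularities of $\mathcal M_d$, run the computation on the $PGL_n$-torsor of parametrizations $\phi\colon\PP^{n-1}\xrightarrow{\sim}H$. There the map $(H,\phi)\mapsto[f\circ\phi]$ is generically finite if and only if $s_X$ is, and its differential is $\psi\mapsto\sum_i\psi_i\,(\partial_i f)|_H$. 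Adding the Hilbert function computation above completes your proof.
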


To date,  the  maximal variation property for the hyperplane sections of a smooth hypersurface $X\subset \PP^n$ of degree $d\ge 3$ has been established  in the following cases:

\begin{itemize}
\item If $X$ is general, with either $d\ge 3$ and $n\ge 4$, or $d\ge 4$ and $n=3$ \cite{S}, \cite{W}, \cite{beauville2}.
    \item If $d=3$ and $n\leq 4$ \cite[Theorem 1]{AR}. 
    \item If $d\geq 4$ and $n = 3$. \cite{BMMN-codim4}. 
    \item If $d \geq n+2$ \cite[Corollary 4.7]{B-MR}.
    \item If $d = n+1$  \cite[Theorem 4.2]{B-P}.
\end{itemize}

   As an application of Theorem~\ref{arb num var}, we can extend these results and we prove:

    \begin{theorem} \label{maxvar thm}
Let \(X \subset \mathbb{P}^n\) ($n \geq 3$) be a smooth hypersurface of degree \(d \geq 4\). Then the hyperplane sections of \(X\) have maximal variation.
\end{theorem}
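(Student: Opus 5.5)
The plan is to combine Beauville's criterion, Proposition~\ref{criterion}, with the injectivity range of Theorem~\ref{arb num var}. The Jacobian ideal $J(X)=(\partial f/\partial x_0,\dots,\partial f/\partial x_n)$ of a smooth hypersurface of degree $d$ is an artinian complete intersection. Its $n+1$ generators all have degree $d-1$, because smoothness in characteristic zero means the partials have no common zero. So Proposition~\ref{criterion} reduces Theorem~\ref{maxvar thm} to showing that $R/J(X)$ satisfies the WLP in degree $d$, i.e.\ that $\times\ell\colon [R/J(X)]_{d-1}\to[R/J(X)]_d$ is injective for general $\ell$.

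The hypotheses match: Proposition~\ref{criterion} requires $d\ge 3$, $n\ge 3$, and $d\ge 4$ if $n=3$, all of which hold under $d\ge 4$, $n\ge 3$. I split into the cases $n=3$ and $n\ge 4$.

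\emph{Case $n=3$.} Here the result is already recorded in the list preceding the theorem, via \cite{BMMN-codim4}. Alternatively, Theorem~\ref{d1=d2=d3=d4-intro} with generator degree $d-1$ gives the WLP in all degrees $t<\lfloor \frac{3(d-1)+1}{2}\rfloor=\lfloor\frac{3d-2}{2}\rfloor$. We need $d<\lfloor\frac{3d-2}{2}\rfloor$, which holds for $d\ge 4$: for instance $d=4$ gives $\lfloor 5\rfloor=5>4$, and the gap only grows with $d$.

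\emph{Case $n\ge 4$.} Apply Theorem~\ref{arb num var} with generator degree $e=d-1\ge 3\ge 2$. It gives the WLP in all degrees $t\le\frac{3e-1}{2}=\frac{3d-4}{2}$, which is the Corollary's bound $\frac{3d}{2}-2$. The needed degree is $t=d$, and $d\le \frac{3d-4}{2}$ is equivalent to $d\ge 4$, so it holds exactly under our hypothesis. Hence $R/J(X)$ has the WLP in degree $d$, and Proposition~\ref{criterion} yields maximal variation.

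Given the earlier results, there is no real obstacle; all the substance lies in Theorem~\ref{arb num var}. The only point needing care is the index shift: the generators of $J(X)$ have degree $d-1$, not $d$. With that shift, the threshold $d\ge4$ is precisely where the inequality $d\le\frac{3(d-1)-1}{2}$ starts to hold. One should also confirm that the notion of "WLP in degree $d$" used in Proposition~\ref{criterion}, namely injectivity from degree $d-1$ to degree $d$, agrees with the paper's convention, which it does by the definition in the introduction.
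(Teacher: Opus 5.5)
Your proposal is correct and is essentially the paper's argument: combine Beauville's criterion (Proposition~\ref{criterion}) with Theorem~\ref{arb num var} applied to the Jacobian ideal, whose generators have degree $d-1$, which gives injectivity in degree $d$ exactly when $d\le \frac{3d}{2}-2$, i.e.\ $d\ge 4$. Your separate treatment of $n=3$ via \cite{BMMN-codim4} is a useful explicit step, since Theorem~\ref{arb num var} is stated only for $n\ge 4$ and the paper covers that case only implicitly, through the results recalled in its introduction.
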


\section{Background}

We set $R = k[x_0,\dots,x_n]$ where $k$ is a field of characteristic zero. 

\begin{definition}
    A subscheme $X$ of $\mathbb P^n$ is \emph{Arithmetically Cohen Macaulay} (ACM) if the homogeneous coordinate ring of $X$ is Cohen-Macaulay.
\end{definition}

\begin{definition}
    For a curve $C \subset \mathbb P^n$, the {\it Hartshorne-Rao module} is the graded module
    \[
    M(C) = \bigoplus_{t \in \mathbb Z} H^1 (\mathcal I_C (t)).
    \]
    It is well known that $C$ is ACM if and only if $M(C) = 0$.
\end{definition}

\begin{definition}
    Let $A = \bigoplus_{i \geq 0} [A]_i$ be a standard graded algebra over $k = [A]_0$. Let $\ell$ be a general linear form. If $\times \ell : [A]_{i-1} \rightarrow [A]_i$ has maximal rank for all $i\ge 1$ we say that $A$ has the {\it Weak Lefschetz Property (WLP).} If, for some degree $i_0$,  $\times \ell : [A]_{i_0-1} \rightarrow [A]_{i_0}$ has (resp. does not have) maximal rank, we say that $A$ {\it has (resp. fails) the WLP in degree $i_0$.}
\end{definition}

We now recall some notation and results by Huneke and Ulrich from \cite{HU}, which we state
in our setting.

\begin{definition}
  Let $M$ be a finitely generated graded $R$-module.  We set
  \[
  a_- (M) = \min \{ i | \ [M]_i \neq 0 \}.
  \]
\end{definition}

\begin{lemma}[\cite{HU}, Socle Lemma] \label{socle lemma} Assume $k$ is algebraically closed.  Let $M$ be a
  non-zero finitely generated graded $R$-module.  Let $L \in [R]_1$
  be a general linear form and let
  \[
  0 \rightarrow K \rightarrow M(-1) \stackrel{\times
    L}{\longrightarrow} M \rightarrow D \rightarrow 0
  \]
  be exact.  If $K \neq 0$ then $a_- (K) > a_- (\Soc (D))$.
\end{lemma}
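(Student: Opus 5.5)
The plan is the classical \emph{generic linear form and differentiation} argument. I would first pass to a generic form. Let $u_0,\dots,u_n$ be indeterminates, set $F=k(u_0,\dots,u_n)$, $M_F=M\otimes_k F$, and $L_u=\sum_i u_i x_i$. The kernel and cokernel of $\times L_u$ on $M_F$ have, in each degree, dimensions equal to those of $\times L$ for a general $L\in[R]_1$. The same holds for the socle of the cokernel, since it is the kernel of the matrix map $D\to \bigoplus_i D(1)$, $\bar m\mapsto(x_i\bar m)_i$. All these ranks are lower semicontinuous in $u$, and $k$ is algebraically closed, so a Zariski open set of specializations realizes the generic values. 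It therefore suffices to prove the inequality for $L_u$ over $F$. Moreover, for each fixed degree one can choose elements whose coefficients are polynomial, hence differentiable, in $u$.

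Next, let $a=a_-(K)$ and pick a nonzero $m\in[M_F]_{a-1}$ with $L_u m=0$. Since $M$ is defined over $k$, we may write $m$ with coefficients in $k[u]$ after clearing denominators. Differentiating $L_u m=0$ with respect to $u_i$ gives
\[
x_i m + L_u\,\frac{\partial m}{\partial u_i}=0 \qquad (0\le i\le n),
\]
so $x_i m\in L_u M_F$ for every $i$. Hence the class $\bar m\in[D]_{a-1}$ is annihilated by the maximal ideal, i.e.\ $\bar m\in\Soc(D)$. If $\bar m\neq 0$, then $a_-(\Soc D)\le a-1<a$, which is the claim. Recall here that $K\subset M(-1)$, so degree $a$ in $K$ corresponds to degree $a-1$ in $M$.

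The main obstacle is the case $m\in L_uM_F$, i.e.\ $m=L_u m'$ with $L_u^2m'=0$. To handle it, I would replace the single element by a minimal choice over all powers of $L_u$. Among all pairs $(n,j)$ with $j\ge1$, $n\in M_F$ homogeneous, $L_u^j n=0$ and $L_u^{j-1}n\ne0$, choose one for which $m:=L_u^{j-1}n$ has minimal degree $a-1$ (so $m$ is a lowest-degree element of $K$). Among those, take $j$ maximal, which is possible since $M$ is finitely generated and $a_-(M)$ bounds $\deg n$ from below. Differentiating $L_u^j n=0$ gives
\[
jL_u^{j-1}x_i n + L_u^j\,\partial_{u_i}n=0,
\]
so $x_i m\in L_u^{j}M_F\subset L_uM_F$ and $\bar m\in\Soc(D)$. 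If $m=L_u m''$, then $m''$ satisfies $L_u^{j+1}m''=0$. Writing $m''=L_u^{j-1}n''$ by a further differentiation/lifting argument would contradict maximality of $j$. I expect this lifting step, which guarantees $\bar m\neq0$, to require the most care. If it resists, I would instead follow Huneke--Ulrich's original route: reduce to $M$ of finite length via $H^0_{\mathfrak m}$ and local cohomology, where the powers $L_u^j$ eventually vanish and the maximal-$j$ argument terminates cleanly.

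Finally, specialize $u$ to a general point of $k^{n+1}$ using the semicontinuity of the first step, obtaining $a_-(K)>a_-(\Soc D)$ for general $L$.
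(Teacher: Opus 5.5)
The paper does not prove this lemma; it quotes it from Huneke--Ulrich, so your argument has to stand on its own. Your reduction to the generic form $L_u$ and the case $m\notin L_uM_F$ are fine. The step you flag as the main obstacle, however, does not merely need care: it fails.

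For $j\ge 2$ your candidate is $m=L_u^{j-1}n=L_u\cdot(L_u^{j-2}n)\in L_uM_F$, so $\bar m=0$ in $D$ by construction. The proposed lift $m''=L_u^{j-1}n''$ also cannot exist. Since $(0:_{M_F}L_u)_t=0$ for $t\le a-2$, the map $L_u$ is injective in those degrees, so $m=L_um''$ forces $m''=L_u^{j-2}n$. But $L_u^{j-2}n\in L_u^{j-1}M_F$ would force $n\in L_uM_F$, which is exactly what maximality of $j$ excludes. Passing to $H^0_{\mathfrak m}(M)$ changes nothing. Maximizing $j$ thus guarantees $\bar m=0$ whenever $j\ge2$, so your construction never produces a socle element beyond the case you had already handled.

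What maximality does give you is $\bar n\neq 0$, and $\bar n$ is the element to work with. Put $N_s=0:_{M_F}L_u^s$ and let $\bar N_s$ be its image in $D$. Differentiating $L_u^s y=0$ gives $L_u^{s-1}(s\,x_iy+L_u\,\partial_{u_i}y)=0$. Hence, in characteristic zero, $x_iy\in N_{s-1}+L_uM_F$, that is, $\mathfrak m\,\bar N_s\subseteq\bar N_{s-1}$ with $\bar N_0=0$; your $j=1$ computation is the case $s=1$.

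Now $\bar n\in\bar N_j$ is nonzero of degree $a-j$, and $\mathfrak m^j\bar n=0$. Take $s\le j-1$ maximal with $\mathfrak m^s\bar n\neq 0$. Any nonzero homogeneous element of $\mathfrak m^s\bar n$ lies in $\Soc(D)$ and has degree $a-j+s\le a-1$, so $a_-(\Soc D)\le a-1<a=a_-(K)$ for $L_u$. Your specialization step then finishes the proof.

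One small point on that specialization step: when $0:_{M_F}L_u=0$, infinitely many degrees are involved, so degreewise semicontinuity is not enough. There you should argue instead that $\mathfrak m\notin\operatorname{Ass}M$, so a general $L$ is a nonzerodivisor on $M$.
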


\begin{remark} \label{soc of h1} Assume the following: $S = k[x_0,x_1,\dots,x_{n-1}]$, $Z$ is a
  zero-dimensional subscheme of $\mathbb P^{n-1} = H$ with homogeneous ideal
  $I_Z \subset S$, $B = S/I_Z$, $\bar B$ is an artinian reduction of
  $B$ by a linear form, and
  \[
  0 \rightarrow \bigoplus_{i=1}^{b_{n-1}} S(-n_{n-1,i}) \rightarrow \dots\rightarrow
  \bigoplus_{i=1}^{b_1} S(-n_{1,i}) \rightarrow S \rightarrow B
  \rightarrow 0
  \]
  is the minimal free resolution.  In \cite{HU} it is also pointed out
  that then
  \[
  \Soc (H^1_\ast(\mathcal I_Z)) = \bigoplus_{i=1}^{b_{n-1}} k(-n_{n-1,i} +n) =
  \Soc (\bar B)(1).
\]
\end{remark}

Still following the work of \cite{HU}, 
let $M = H^1_\ast (\mathcal I_{C})$, where $C \subset \PP^n$ is a curve whose general hyperplane section is $Z = C \cap H$.
From (\ref{std exact}) below we see that $\Soc (D) \subset
\Soc (H^1_\ast(\mathcal I_{Z|H}))$.  We thus obtain
\[
a_- (K) > a_- (\Soc (D)) \geq a_- (\Soc (H^1_\ast (\mathcal I_{Z|H})))
\geq a_- (\Soc (\bar B)) -1;
\]
that is
\begin{equation} \label{hu ineq} a_-(K) \geq a_- (\Soc (\bar B)).
\end{equation}

\begin{remark} \label{trans socle lemma}
Notice that $K$ represents the forms in $I_{Z|H}$ that do not lift to $I_C$. Thus one way of phrasing the result of the Socle Lemma, which we will use, is that

\begin{quotation}

{\it the socle of $\bar B$ starts no later than the least degree of a form vanishing on $Z$ that does not lift to $C$.}

\end{quotation}

\noindent Since the latter degree can sometimes be read from the Hilbert function of $\bar B$, this can be used to force socle elements in $\bar B$.
\end{remark}

The notion of basic double linkage will be used in an important way in this paper. As its name suggests, it was created with applications to liaison theory in mind. It was introduced by Lazarsfeld and Rao \cite{LR} in 1983 as a key tool in the structure theorem that they proved for special liaison classes of curves in $\mathbb P^3$. In the form used in this paper, it was generalized by Bolondi and Migliore in 1989. It has been substantially generalized further in ways that we omit here (see for instance \cite{KMMNP}). We state it here only in the special situation that we will need.

\begin{definition}[\cite{BM4} Definition 1.7] \label{BDL def}
    Let $Z \subset \mathbb P^m$ be a finite set of points and choose forms $f_1,\dots,f_{m-1} \in I_Z$ such that $(f_1,\dots,f_{m-1})$ is a regular sequence. Let $g \in R$ be a homogeneous polynomial so that $(g,f_1,\dots,f_{m-1})$ is also a regular sequence. Then the ideal $g \cdot I_Z + (f_1,\dots,f_{m-1})$ is the saturated ideal of a zero-dimensional scheme $Z'$ in $\mathbb P^m$. If $Y$ is the complete intersection scheme defined by $(g,f_1,\dots,f_{m-1})$ then as schemes we have $Z' = Z \cup Y$. If moreover $g$ does not vanish at any point of $Z$ then the latter is a disjoint union. We say that $Z'$ is a {\it basic double link} of $Z$, and that $g \cdot I_Z + (f_1,\dots,f_{m-1})$ is a {\it basic double link} of $I_Z$.
\end{definition}

\begin{remark}
    We stress that the ideal constructed in Definition \ref{BDL def} is saturated, as this will be important in our proof. We also remark that $Z$ and $Z'$ are linked in two steps (hence the name), although we will not need this here.
\end{remark}

\section{The basic construction}

We introduce the set-up for establishing our main result by generalizing the construction used in \cite{BMMN-codim4} from the case $n=3$ to the situation where $n\ge 3$. Then we prove some initial properties and recall needed results. 

Our construction is still based on a union of two complete intersection curves. 
Let $I = (f_0,f_1,\dots,f_n)$ be an artinian complete intersection in $R = k[x_0,\dots,x_n]$ of forms of degree $d$, and consider  
$R/I = R/(f_0,\dots,f_n)$. Let $H$ be a general hyperplane defined by a linear form $\ell$ and let $\overline{R}=R/(\ell)$ be the coordinate ring of $H\cong\mathbb P^{n-1}$. We will denote the image in $\overline{R}$ of a form $f$ in $R$ by $\overline f$ and we note that $\overline{R} \cong k[\overline{x}_0,\overline{x}_1,\dots,\overline{x}_{n-1}]$. We will need the following ideals and varieties defined by them.
\[
\begin{array}{rclllccccccccccccc}
I_{X_0} & = & (f_4,\dots,f_{n}) & & (\dim X_0 = 3) \\
I_{V_1} & = & (f_{0},f_{1}) && (\dim V_1 = n-2) \\
I_{V_2} & = & (f_{2},f_{3}) && (\dim V_2 = n-2) \\
I_V & = & I_{V_1} \cap I_{V_2} && (\dim V = n-2) \\
V & = & V_1 \cup V_2 \\
I_Y & = & (f_{0}, f_{1}, f_{2}, f_{3}) && (\dim Y = n-4) \\
& = & I_{V_1} + I_{V_2} \\
Y & = & V_1 \cap V_2 \\
I_{C_1} & = & (f_0, f_1,f_{4},\dots, f_{n}) & & (\dim C_1 = 1) \\
& = & I_{V_1} + I_{X_0} \\
C_1 & = & V_1 \cap X_0 \\
I_{C_2} & = & (f_2, f_3, f_4,\dots, f_{n}) & & (\dim C_2 = 1) \\
& = & I_{V_2} + I_{X_0} \\
C_2 & = & V_2 \cap X_0 \\
I_C & = & I_{C_1} \cap I_{C_2} && (\dim C = 1) \\
C & = & V \cap X_0 \\
C_1 \cap C_2 & = & \emptyset
\end{array}
\]

Assume that the forms $f_0,\dots,f_n$ form a generically chosen basis for the $(n+1)$-dimen\-sional vector space  $[I]_d$. By applying Bertini's theorem (see for instance \cite{kleiman}) several times, we can assume that the varieties  
\[
X_0, V_1, V_2, Y, C_1, C_2
\]
are all smooth and irreducible, as well as complete intersections. $C$ is smooth but not irreducible. Note also that
\[
I_V = I_{V_1} \cap I_{V_2} = I_{V_1} \cdot I_{V_2}
\]
since $V_1$ and $V_2$ are ACM of codimension two and they meet in the expected codimension four in $\PP^n$).

For the smooth curve $C = C_1 \cup C_2$ defined above, we define 
\[
Z_1 = C_1 \cap H, \ Z_2 = C_2 \cap H,\ \ \text{and} \ \ S_0 = X_0 \cap H.
\]
Since $C_1$, $C_2$ and $X_0$ are complete intersections, we have the ideals
\[
\begin{array}{rcl}
I_{S_0}  = I_{X_0}\overline{R}& = & (\overline{f_4}, \overline{f}_5,\dots, \overline{f}_{n}), \\
I_{Z_1} = I_{C_1}\overline{R}& = & (\overline{f}_0, \overline{f}_1,\overline{f}_4,\dots, \overline{f}_{n}) =(\overline{f}_0, \overline{f}_1) + I_{S_0}, \\
I_{Z_2} = I_{C_2}\overline{R}& = & (\overline{f}_2, \overline{f}_3,\overline{f}_4,\dots, \overline{f}_{n})=(\overline{f}_2, \overline{f}_3) + I_{S_0}.
\end{array}
\]
However, the ideal $I_Z\subseteq \overline{R}$, defining $Z$ is not given by $I_C\overline{R}$ since $C$ is not ACM. The crucial point of our method is to understand the difference between these ideals.

\begin{lemma} \label{facts about S,C}
\begin{itemize}
\item[(a)] $H^i(\mathcal I_V(t)) = 0$ for all integers $t$ and $1 \leq i \leq n-3$.

\item[(b)]  $H^{n-2}(\mathcal I_V(t)) \cong H^{n-3}(\mathcal I_Y(t))$ for all $t$.

\item[(c)] $I_C = I_V + (f_4,\dots,f_{n}) = I_{V_1} \cdot I_{V_2} + (f_4,f_5,\dots,f_{n})$. In particular, the ideals are saturated.

\item[(d)] We have that
  \[
  M(C) \cong A = R/ ( f_0,f_1,f_2,f_3, ,\dots,f_n ).
  \]
In particular, the minimal free resolution of $M(C)$ is given by the
  Koszul resolution.
\end{itemize}
\end{lemma}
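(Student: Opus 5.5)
The plan is to treat $V=V_1\cup V_2$ as the union of two codimension-two complete intersections meeting properly in $Y$. Then $C$ is obtained from $V$ by cutting with the regular sequence $f_4,\dots,f_n$, which stays regular on $R/I_V$.

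For (a) and (b) I would use the Mayer--Vietoris sequence
\[
0\to R/I_V \to R/I_{V_1}\oplus R/I_{V_2}\to R/I_Y\to 0,
\]
which holds because $I_{V_1}+I_{V_2}=I_Y$ and all these ideals are saturated. Sheafifying gives $0\to\mathcal I_V\to\mathcal I_{V_1}\oplus\mathcal I_{V_2}\to\mathcal I_Y\to 0$. The schemes $V_1,V_2,Y$ are complete intersections, hence ACM, so $H^i_*(\mathcal I_{V_j})=0$ for $1\le i\le n-2=\dim V_j$ and $H^i_*(\mathcal I_Y)=0$ for $1\le i\le n-4$. The long exact sequence then gives $H^i_*(\mathcal I_V)=0$ for $1\le i\le n-3$. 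For $i=1$ we also need surjectivity of $H^0_*(\mathcal I_{V_1})\oplus H^0_*(\mathcal I_{V_2})\to H^0_*(\mathcal I_Y)$, which holds since $I_Y=I_{V_1}+I_{V_2}$. We further get $H^{n-2}_*(\mathcal I_V)\cong H^{n-3}_*(\mathcal I_Y)$, because the neighbouring terms $H^{n-3}_*(\mathcal I_{V_j})$ and $H^{n-2}_*(\mathcal I_{V_j})$ vanish. Since $\dim Y=n-4$, the module $H^{n-3}_*(\mathcal I_Y)$ is its top cohomology, and it is nonzero.

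For (c) the key point is that, by (a), $R/I_V$ has depth $n-1=\dim R/I_V$ (we have $I_V=I_{V_1}I_{V_2}$, and $H^i_*(\mathcal I_V)=0$ for $1\le i\le n-3$ together with saturation gives depth $\ge n-1$), so $V$ is ACM. Since $V$ is ACM of codimension two and $f_4,\dots,f_n$ are general enough to cut $V$ properly, they form a regular sequence on $R/I_V$. Hence $I_V+(f_4,\dots,f_n)$ is unmixed, equal to its saturation, and defines $V\cap X_0=C$. Scheme-theoretically this intersection is $C_1\sqcup C_2$, because $C_1\cap C_2=Y\cap X_0=\emptyset$: that is $n+1$ general forms of degree $d$ cutting out the empty set.

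Two points need care. First, we must check that the saturated ideal of $C_1\cup C_2$ equals $I_V+(f_4,\dots,f_n)$, not just that the radicals agree; this follows from unmixedness and the fact that the two ideals agree locally at every point of $C$, where only one $V_j$ passes. Second, the equality $I_{V_1}\cap I_{V_2}=I_{V_1}I_{V_2}$ (the Tor-independence of two complete intersections meeting properly) has already been noted above the lemma.

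For (d) I would compute $M(C)$ by repeated hyperplane-section sequences. Set $W_k=V\cap V(f_4,\dots,f_{k})$, so that $W_3=V$ and $W_n=C$. Because each $f_{k+1}$ is regular on the Cohen--Macaulay ring $R/I_{W_k}$, we have
\[
0\to\mathcal I_{W_k}(-d)\to\mathcal I_{W_k}\to\mathcal I_{W_{k+1}|\,\mathbb P^n}\to 0 .
\]
Here I would actually work instead with the analogous Mayer--Vietoris for $C$ itself, which is more direct:
\[
0\to R/I_C\to R/I_{C_1}\oplus R/I_{C_2}\to R/(I_{C_1}+I_{C_2})\to 0 .
\]
We have $I_{C_1}+I_{C_2}=(f_0,\dots,f_n)=I$, which is artinian. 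The map into the artinian quotient $R/I$ has a cokernel-free sheafification, so the long exact sequence gives
\[
M(C)=H^1_*(\mathcal I_C)\cong \operatorname{coker}\bigl(R/I_{C_1}\oplus R/I_{C_2}\to R/I\bigr)
\]
once one uses $H^1_*(\mathcal I_{C_j})=0$ and $H^0_*(\mathcal O_{C_j})=R/I_{C_j}$ (the $C_j$ are ACM). This cokernel is zero. I need to redo this: the correct sequence is of local cohomology, $0\to H^0_{\mathfrak m}(R/I)\to H^1_{\mathfrak m}(R/I_C)\to H^1_{\mathfrak m}(R/I_{C_1})\oplus H^1_{\mathfrak m}(R/I_{C_2})=0$. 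Since $R/I_C$ has depth $\ge1$ by saturation and $R/I$ is artinian, this gives $M(C)\cong H^1_{\mathfrak m}(R/I_C)\cong R/I=A$. The Koszul resolution statement is then immediate.

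The main obstacle is the careful verification in (c), namely the regular-sequence and saturation claims for $I_V+(f_4,\dots,f_n)$. These rely on the genericity of the basis and on $V$ being ACM, which comes from (a).
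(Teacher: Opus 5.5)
Your arguments for (a), (b) and (d) are essentially the paper's: Mayer--Vietoris for $V_1\cup V_2$ with $I_{V_1}+I_{V_2}=I_Y$, and for $C_1\sqcup C_2$ with $I_{C_1}+I_{C_2}=I$ artinian. In (d), delete the false start about the cokernel being zero; the local cohomology sequence you end with is correct.

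\textbf{The gap is in (c): $V$ is \emph{not} ACM.} Because $I_V$ is saturated, $H^0_{\mathfrak m}(R/I_V)=0$ and $H^i_{\mathfrak m}(R/I_V)\cong H^i_*(\mathcal I_V)$ for $1\le i\le n-1$. So (a) only yields $\operatorname{depth} R/I_V\ge n-2$; you are off by one. Moreover, (b) together with your own observation $H^{n-3}_*(\mathcal I_Y)\neq 0$ gives $H^{n-2}_{\mathfrak m}(R/I_V)\neq 0$. Hence the depth is exactly $n-2<n-1=\dim R/I_V$. This is as it must be: a union of two codimension-two varieties meeting in codimension four is never ACM, by Hartshorne's connectedness theorem.

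This breaks the rest of your argument for (c). Your step ``$f_4,\dots,f_n$ cut $V$ properly, so they form a regular sequence'' needs Cohen--Macaulayness, so it is unjustified. Your conclusion is also inconsistent with (d): a regular sequence on a Cohen--Macaulay $R/I_V$ would make $R/I_C$ Cohen--Macaulay, i.e.\ $C$ ACM, whereas $M(C)\cong A\neq 0$.

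The repair is the paper's argument: depth $\ge n-2$ is enough for $n-3$ cuts.
\begin{itemize}
\item Since $[I]_d$ generates an $\mathfrak m$-primary ideal, a general element of $[I]_d$ avoids any finite set of non-maximal primes.
\item For $3\le j\le n-1$, the quotient $R/(I_V,f_4,\dots,f_j)$ has depth $n+1-j\ge 2$. So $\mathfrak m$ is never an associated prime, and a general $f_{j+1}$ is a nonzerodivisor on it.
\item Thus $f_4,\dots,f_n$ is a regular sequence on $R/I_V$, and $R/(I_V+(f_4,\dots,f_n))$ has depth $\ge 1$. That is, the ideal is saturated, though not Cohen--Macaulay.
\end{itemize}
Your local comparison along $C$ then finishes the job: only one $V_j$ passes through each point of $C$, and $Y\cap X_0=\emptyset$. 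So this saturated ideal has the same sheafification as $I_{C_1}\cap I_{C_2}$, hence equals $I_C$. Just replace ``unmixed'' by ``saturated'' in that step.
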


\begin{proof}
For (a), consider the exact sequence
\[
0 \rightarrow I_V \rightarrow I_{V_1} \bigoplus I_{V_2} \rightarrow I_{V_1} + I_{V_2} \rightarrow 0.
\]
Since $I_{V_1} + I_{V_2} = I_Y$ is a complete intersection of codimension four, we have $h^i_*(\mathcal I_Y) = 0$ for $1 \leq i \leq n-4$. Furthermore, setting $h^i_*(\mathcal I_X) := \dim \bigoplus_{t \in \mathbb Z} H^i (\PP^n, \mathcal I_X (t))$ for a scheme $X \subset \PP^n$, 
we have $h^i_*(\mathcal I_{V_1}) = 0$ for $1 \leq i \leq n-2$, and same for $V_2$.
Since $I_{V_1}$ and $I_{V_2}$ define ACM schemes and  $I_{V_1} + I_{V_2}$ is the saturated ideal of the complete intersection $Y$ of dimension $n-4$, sheafifying and taking the long exact sequence in cohomology gives the result. The same sequence gives (b).

We turn to (c). We know $V$ has dimension $n-2$ (so $R/I_V$ has Krull dimension $n-1$) and $R/I_V$ has depth at least $n-2$  by (a). Since $(f_4,\dots,f_{n})$ is generated by a regular sequence, this proves the first part of (c). The second part then follows from the equality $I_{V_1} \cap I_{V_2} = I_{V_1} I_{V_2}$ mentioned above.

For (d), the first part is clear. For the Hartshorne-Rao module, it follows from the exact sequence
\[
0 \rightarrow I_C \rightarrow I_{C_1} \oplus I_{C_2} \rightarrow I_{C_1} + I_{C_2} \rightarrow 0,
\]
sheafifying and taking cohomology (using that $C_1$ and $C_2$ are disjoint and ACM).
\end{proof}

\begin{remark} \label{deggens}
Part (c) of Lemma \ref{facts about S,C} gives that $I_C$ is minimally generated by $n-3$ forms of degree $d$ (which generate $I_{S_0}$) and four forms of degree $2d$ (which generate $I_V=I_{V_1}I_{V_2}$).
\end{remark}

Let $\ell$ be a general linear form and let $H$ be the hyperplane
defined by $\ell$.  Let $Z$ be the zero-dimensional scheme cut out on
$C$ by $H$.  As a subscheme in $\mathbb P^n$, $Z$ has a homogeneous
ideal that we will denote $I_Z$, and as a subscheme of $H$ it has a
homogeneous ideal $I_{Z|H}$.  Consider the exact sequence of sheaves
\[
0 \rightarrow \mathcal I_C(t-1) \stackrel{\times L}{\longrightarrow} \mathcal I_C (t) \rightarrow \mathcal I_{Z|H}(t) \rightarrow 0
\]
which yields the long exact sequence
{\footnotesize
  \begin{equation} \label{std exact} 0 \rightarrow [I_C]_{t-1}
    \stackrel{\times L}{\longrightarrow} [I_C]_t \rightarrow
    [I_{Z|H}]_t \rightarrow [A]_{t-1} \stackrel{\times
      L}{\longrightarrow} [A]_t \rightarrow H^1(\mathcal
    I_{Z|H}(t)) \rightarrow H^2(\mathcal I_C(t-1)) \rightarrow
    H^2(\mathcal I_C(t)) \rightarrow 0.
  \end{equation}}

  \begin{remark}
Since the Hilbert function of $C$ is determined, the question of
whether $A$ has the WLP depends completely on understanding the
Hilbert function of $I_{Z|H}$, and in particular on when $I_Z$ has a generator that does not lift to $I_C$. Since $Z$ will always be contained  in the hyperplane $H$, henceforth by $I_Z$ we will mean the ideal of $Z$ in $H$. Above we maintained $I_{Z|H}$ for emphasis since we view $C$ in $\mathbb P^n$ but $Z$ in $H$. 
\end{remark}

Since $H$ is a general hyperplane, the hyperplane sections $Z_1$ and $Z_2$ are two complete intersections of type $(d,d, \dots, d)$ (defining a complete intersection set of points   of degree $d^{n-1}$ in $H = \PP^{n-1}$) lying on the same smooth surface $S_0$ (again using Bertini).

We want to know under certain circumstances that any elements of $I_Z$ in certain degrees automatically lift to $I_C$. This will lead to a contradiction. 

\begin{remark} \label{irred curve}
If the ideal of the union $Z = Z_1 \cup Z_2$ has a minimal generator $g$ of degree $e \ge d$, it cuts out on the surface $S_0$ a curve containing $Z$, and it would be natural, and very useful, to assume without loss of generality that this curve is smooth and irreducible. Unfortunately, we have not been able to prove this.
\end{remark}

\begin{remark}
The general hyperplane section of an irreducible curve has the so-called {\it Uniform Position Property (UPP)} \cite{harris}, meaning that any two subsets of the same cardinality have the same Hilbert function. Thus $Z_1$ alone or $Z_2$ alone have the UPP. However, it is easy to see that the union, $Z$, does not have the UPP. Nevertheless, $Z$ does have useful special properties. In \cite{BMMN-codim4}, a key method is the {\it Symmetry Principle}. It was introduced for the case $n=3$, but it applies equally well to our more general situation. It says that 

\begin{quotation} 
{\it there can be no geometric or numerical property distinguishing  $Z_1$ from $Z_2$.}
\end{quotation}

\noindent Despite not having the UPP, we will see in Lemma \ref{lgp rmk} that the  points of $Z $ are in {\it linear general position (LGP)}, meaning that no $n$ of the points lie on a hyperplane in $H = \mathbb P^{n-1}$. The argument used in \cite{harris} to prove UPP was based on proving that the monodromy group associated to a general hyperplane section of an irreducible curve is the full symmetric group. In our situation we have the following slightly weaker result. 
\end{remark}

\begin{lemma}[\cite{POLITUS}] \label{lgp rmk}
    Let $C$ be the union  of two nondegenerate curves $C_1$ and $C_2$ in $\PP^n$, both  of which are reduced and irreducible, of degrees $d_1, d_2$ respectively. We do not assume that the curves have the same degree or that they are disjoint. Let $Z = Z_1 \cup Z_2$, where $Z_i = C_i \cap H$ and $H$ is a general hyperplane. 

    \begin{itemize}
    \item[\rm (i)] If $G_i$ is the monodromy group of $Z_i$ as $H$ varies, $i = 1,2$, then the monodromy group of $Z$ is $G_1 \times G_2$.  More precisely, $G_1 \times G_2 = S_{d_1} \times S_{d_2}$, the direct product of the symmetric groups. 
    
        \item[\rm (ii)]  Both $Z_1$ and $Z_2$ have the UPP separately and the union has LGP.
        
        \item[\rm (iii)]  Suppose that $Z$ lies on a unique curve $Y$ of a certain degree and suppose that $Y = Y_1 \cup Y_2$. Then both $Y_1$ and $Y_2$ are irreducible and (up to interchanging $C_1$ and $C_2$) $Z_i \subset Y_i$ for $i = 1,2$.
        \end{itemize}
    \end{lemma}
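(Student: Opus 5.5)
We would prove the three parts in order, following Harris's classical monodromy argument and adapting it to a reducible curve. Let $\mathcal U \subset (\PP^n)^\vee$ be the open set of hyperplanes transverse to both $C_1$ and $C_2$ and avoiding $C_1\cap C_2$. Consider the incidence correspondence $\mathcal Z=\{(H,p): H\in\mathcal U,\ p\in C\cap H\}$. It splits as $\mathcal Z_1\sqcup\mathcal Z_2$, where $\mathcal Z_i$ is the locus with $p\in C_i$. Each $\mathcal Z_i\to\mathcal U$ is a finite étale cover of degree $d_i$. Monodromy cannot move a point of $Z_1$ to a point of $Z_2$, since the $\mathcal Z_i$ are different components. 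Hence the monodromy group $G$ of $Z$ lies in $G_1\times G_2\subseteq S_{d_1}\times S_{d_2}$. By Harris's theorem applied to each irreducible nondegenerate curve (characteristic zero), $G_i=S_{d_i}$.

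To get equality $G=S_{d_1}\times S_{d_2}$, we would show that $G$ contains the transpositions of each factor separately. Harris's argument produces a transposition in $G_1$ from a simple tangent hyperplane to $C_1$. Here we take a general tangent hyperplane $H_0$ to $C_1$ at a general point. Then $H_0$ is simply tangent to $C_1$ and meets $C_2$ transversally, since the tangent hyperplanes to $C_1$ form a family of dimension $n-1$ and a general member is not tangent to $C_2$. A small loop around the corresponding branch divisor component then acts as a transposition on $Z_1$ and trivially on $Z_2$. So $G$ contains $(\tau,1)$ for a transposition $\tau$. Since $G$ projects onto $S_{d_1}$, conjugation gives all such elements, hence $S_{d_1}\times 1\subseteq G$. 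Symmetrically $1\times S_{d_2}\subseteq G$, which proves (i).

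For (ii), UPP of each $Z_i$ is Harris's result. For LGP we use a dimension count. Suppose some $n$ points of $Z$ lay on a hyperplane of $H$. By (i), every choice of $a$ points of $Z_1$ and $n-a$ points of $Z_2$ (with that same split $a$) would then be dependent. We would then run Harris's argument: vary $H$ through $n-1$ general points $p_1,\dots,p_{n-1}$ of $C$, distributed appropriately between $C_1$ and $C_2$. The span $\Lambda=\langle p_1,\dots,p_{n-1}\rangle$ is a $\PP^{n-2}$. For nondegenerate curves, the hyperplanes containing $\Lambda$ sweep out and cut further points that generically lie off any fixed codimension-two span. So the remaining points of $(C\cap H)$ are not in $\Lambda$, and applying full symmetry within each $Z_i$ gives a contradiction. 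The step we expect to be the main obstacle is the mixed case, where the dependent $n$-tuple uses points of both curves. There we must ensure that the chosen point $p$ of $C_i$ outside $\Lambda$ exists; this holds because $C_i\not\subset\Lambda$ by nondegeneracy.

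For (iii), $Y$ is unique, so monodromy carries $Y$ to itself and permutes its components. Each $Z_i$ is a single $G$-orbit. If some component $Y_1$ contained points of both $Z_1$ and $Z_2$ but not all of $Z$, we would use $S_{d_1}\times S_{d_2}$ transitivity to show that $Y_1$ contains all of the relevant $Z_i$. This follows because the subset $Y_j\cap Z_i$ is $G$-stable up to the permutation of components, and a block decomposition of an $S_{d_i}$-set with $d_i$ large is trivial. Thus, after relabeling, $Z_i\subset Y_i$. Irreducibility of each $Y_i$ follows from the same argument applied to any further splitting of $Y_i$.
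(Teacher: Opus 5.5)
Parts (i) and (ii) are fine. They are the Harris-type monodromy argument that the paper's sketch points to (Eisenbud--Harris, Exercise 11.4, and the classical UPP/LGP proofs). Two small points should be made explicit. In (i), a general tangent hyperplane to $C_1$ is not tangent to $C_2$ because, by reflexivity in characteristic zero, $C_1^\vee=C_2^\vee$ would force $C_1=C_2$. In (ii), a general hyperplane through the span of $n-1$ general points with the prescribed distribution is a general hyperplane; this follows from irreducibility of the incidence cover, which is transitivity from (i).

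The gap is in (iii). Your ``block decomposition'' step does not give what you claim. The sets $Y_j\cap Z_i$ need not be disjoint. Even when they do form a $G$-invariant partition of $Z_i$, primitivity of $S_{d_i}$ still allows the partition into singletons, i.e.\ every component of $Y$ passing through exactly one point of $Z_i$. What full symmetry actually yields is an orbit count. A nonempty $T\subsetneq Z_i$ has $\binom{d_i}{|T|}\ge d_i$ translates, and each must again be of the form $W\cap Z_i$ for a component $W$ of $Y$. So you need an a priori bound of fewer than $d_i$ irreducible components (say from $\deg Y$). You must also argue with irreducible components, since an arbitrary grouping $Y=Y_1\cup Y_2$ need not be monodromy-stable.

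More seriously, even granting that each component of $Y$ contains all or none of each $Z_i$, your argument does not exclude $Z\subset Y_1$, nor a piece of $Y$ containing no point of $Z$. The step ``thus, after relabeling, $Z_i\subset Y_i$'' silently assumes both away. The same problem undermines the irreducibility claim. Applying the argument to a further splitting $Y_1=W\cup W'$ only shows that $Z_1$ lies in one of $W,W'$, which is no contradiction by itself.

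To rule these configurations out you must use the uniqueness of $Y$ in an essential way. You would show that a superfluous piece could be moved within the given class of curves of that degree, producing a second such curve through $Z$; whether this is possible depends on what ``curve of a certain degree'' means. This is precisely the ingredient the paper's sketch relies on: UPP of $Z_1$ and $Z_2$ separately, together with uniqueness. You use uniqueness only to say that monodromy preserves $Y$, and that is not enough.
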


\begin{proof}
Part (i)  is immediate from Exercise 11.4 of  \cite{EHbook}. Its proof and that of  part (ii) are modifications of resutls in \cite[Lemma, page 698]{HarrisDuke},  \cite[Uniform Position Lemma, page 197]{harris}, \cite[Remark 1.7]{Rathmann},  \cite[pages 111--113]{ACGH} and especially  \cite[Chapter 11]{EHbook}. Part (iii) follows from the fact that both $\Gamma_1$ by itself and $\Gamma_2$ by itself have the UPP, and from the uniqueness of $Y_1$ and $Y_2$. See \cite{POLITUS} for details.
\end{proof}

\section{Main Theorem}

We are now in a situation where we can prove the main result. 

\begin{theorem} 
  Let $R/I$ be an equigenerated complete intersection in $k[x_0,\dots,x_n]$, $n \geq 4$, of forms of degree $d \geq 2$.  Then $R/I$ satisfies the WLP in degrees $t\le\frac{3d-1}2$.
\end{theorem}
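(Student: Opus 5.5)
We argue by contradiction, using the construction of Section 3. Suppose $\times \ell\colon [A]_{t-1}\to [A]_t$ fails to be injective for some $t\le \frac{3d-1}{2}$, where $A=R/I\cong M(C)$ by Lemma \ref{facts about S,C}(d). By the exact sequence (\ref{std exact}), a nonzero kernel element in degree $t-1$ yields a form $g\in [I_{Z}]_t$ that does not lift to $[I_C]_t$. By Remark \ref{deggens}, in degrees $<2d$ the ideal $I_C$ agrees with $I_{X_0}=(f_4,\dots,f_n)$, so its restriction to $H$ is just $I_{S_0}$. Hence $g$ is a form of degree $t<2d$ vanishing on $Z=Z_1\cup Z_2$ and not lying in $I_{S_0}$. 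It therefore cuts out on the smooth surface $S_0$ (a complete intersection of type $(d,\dots,d)$ in $H=\PP^{n-1}$) a curve $\Gamma$ of degree $td^{n-3}$ containing both $Z_1$ and $Z_2$. Choose $t$ minimal with this property. Since injectivity is already known for $t\le d$ by the results quoted in the Introduction, we may assume $d<t\le\frac{3d-1}{2}$.

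Next we use the Socle Lemma (Lemma \ref{socle lemma}) in the form of Remark \ref{trans socle lemma}. The socle of the artinian reduction $\bar B$ of $\overline R/I_Z$ must start in degree at most $t$, i.e.\ $I_Z$ has a last syzygy in low degree. We compare this with the structure of $Z$. Each $Z_i$ is a complete intersection of type $(d,\dots,d)$ in $\PP^{n-1}$, so the socle of its artinian reduction sits only in degree $(n-1)(d-1)$. The goal is to show that $\bar B$ has no socle in degrees $\le t$. We intend to do this by a liaison argument on $S_0$. The curve $\Gamma$, or the complete intersection of $S_0$ with $g$ and a general form of degree $d$ through $Z_1$, links $Z_1$ to a residual $W$. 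By Definition \ref{BDL def}, $Z$ is then obtained via a basic double link: $I_{Z}\supseteq \overline{f}_0\,I_{\cdot}+\dots$, and in low degrees $I_Z=I_{Z_1}\cap I_{Z_2}$ looks like $(\overline f_0,\overline f_1)\cdot(\overline f_2,\overline f_3)+I_{S_0}$ modulo the new generator $g$. Mapping cones of these links give the degrees of the last syzygies of $I_Z$ in terms of those of $I_W$, and these should force socle degrees $\ge 2d-1$ or so. That is incompatible with socle in degree $\le t\le \frac{3d-1}{2}$.

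The key numerical input comes from the Symmetry Principle and Lemma \ref{lgp rmk}. If $\Gamma$ is reducible, say $\Gamma=\Gamma_1\cup\Gamma_2$ with $Z_i\subset\Gamma_i$ as in Lemma \ref{lgp rmk}(iii), then $\Gamma_i$ is a curve on $S_0$ of degree at most $td^{n-3}$ through the complete intersection $Z_i$. Since $Z_i$ imposes that $\Gamma_i$ meet the $(d,d)$-type structure, we get $\deg\Gamma_i\ge d\cdot d^{n-3}$ for each $i$, so $t\ge 2d$, a contradiction. If instead $\Gamma$ is irreducible, or more generally cannot be split, then by symmetry it must pass through $Z_1$ and $Z_2$ in the same way. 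Bezout-type counting on $S_0$, intersecting $\Gamma$ with the curves $V(\overline f_0,\overline f_1)\cap S_0$, shows that $\Gamma$ contains $Z_1$ with multiplicity data that, together with the $h$-vector of $Z$ (computable in degrees $<2d$, where $I_Z$ agrees with $I_{S_0}$ plus products), bounds $t\ge\frac{3d}{2}$. This is the source of the bound $\frac{3d-1}{2}$.

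The main obstacle is exactly the case flagged in Remark \ref{irred curve}. We cannot assume $\Gamma$ is smooth and irreducible, and when it is not, we need a uniform way to convert "$g$ vanishes on $Z$ and does not lift to $C$" into a contradiction. My first attempt there would be the purely algebraic route. Compute the Hilbert function of $\overline R/I_Z$ in degrees $\le \frac{3d-1}{2}$ from (\ref{std exact}), where it is forced by the failure of injectivity. Then show, via the basic double link description and the LGP of $Z$ from Lemma \ref{lgp rmk}(ii), that this Hilbert function would require socle in $\bar B$ before degree $t$. Finally, contradict Remark \ref{soc of h1} by computing the tail of the resolution of $I_Z$ through the mapping cone.
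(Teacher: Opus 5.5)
There is a genuine gap. Your setup is right: a form $g\in[I_Z]_t\setminus[I_{S_0}]_t$ of minimal degree $t=d+k$, and, via Remark~\ref{trans socle lemma}, the need to show that $\bar B$ has no socle in degrees $\le t$. But nothing in the proposal actually rules out such socle.

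The geometric route through $\Gamma=V(g)\cap S_0$ is precisely the one Remark~\ref{irred curve} says the authors could not complete. Your reducible case can be rescued by B\'ezout against a general member of $\langle\overline f_0,\overline f_1\rangle$, whose base locus on $S_0$ is $Z_1$. That only works, however, if $\Gamma$ genuinely splits as $\Gamma_1+\Gamma_2$ with $Z_i\subset\Gamma_i$, and Lemma~\ref{lgp rmk}(iii) needs a uniqueness hypothesis you do not have. In the essential case, where a component of $\Gamma$ contains points of both $Z_1$ and $Z_2$ (e.g.\ $\Gamma$ irreducible), B\'ezout against forms of $I_{Z_1}$ or $I_Z$ gives only $t\ge d$. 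Nothing you write produces $t\ge\frac{3d}{2}$.

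The liaison step is also misstated. $\overline f_1I_{Z_2}+(g,\overline f_4,\dots,\overline f_n)$ is a basic double link of $Z_2$, and it defines $Z_1\cup Z_2\cup Y$ for a residual $Y$, not $Z$. So there is no mapping cone computing the tail of the resolution of $I_Z$.

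The missing idea is an algebraic comparison that avoids irreducibility altogether. Write $g=p_0\overline f_0+p_1\overline f_1+\sum_{i\ge4}p_i\overline f_i$ with $\deg p_0=k$. Let $g'\in[I_Z]_e\setminus I_{S_0}$ with $e<2d-k$ have analogous coefficients $q_i$.
\begin{itemize}
\item Since $p_0g'-q_0g\equiv(p_0q_1-q_0p_1)\overline f_1$ modulo $I_{S_0}\subset I_Z$, the form $p_0q_1-q_0p_1$ vanishes on $Z_2$. Its degree is $k+e-d<d$ and $I_{Z_2}$ is generated in degree $d$, so it is zero.
\item Minimality of $g$ and the monodromy statement in Lemma~\ref{lgp rmk} give $\gcd(p_0,p_1)=1$. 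Hence $p_0\mid q_0$ and $g'\in(g,\overline f_4,\dots,\overline f_n)$.
\item So $I_Z$ agrees with a complete intersection in degrees $<2d-k$, and $\bar B$ has no socle before degree $2d-k-1$.
\item The Socle Lemma then forces $g$ to lift whenever $d+k<2d-k-1$, i.e.\ $t\le\frac{3d}{2}-1$.
\end{itemize}
This degree condition, not a B\'ezout count, is where the bound comes from.

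It also shows that the endpoint $d$ odd, $t=\frac{3d-1}{2}$ is not reached this way, since there $2d-k-1=d+k$. The paper treats it separately. It uses the basic double link above with residual $Y=V(p_0,\overline f_1,\overline f_4,\dots,\overline f_n)$ and shows that the number $r$ of new generators of $I_Z$ in degree $d+k+1$ satisfies $1\le r\le 2$. It then excludes $r=2$, and excludes $r=1$ via Cayley--Bacharach and a Koszul-complex argument. Your proposal does not register that this case needs separate treatment.
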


\begin{proof} The maximal rank property of the multiplication map does not change under field extension. Therefore, from now on, we will assume that $k$ is algebraically closed in order to  apply the Socle Lemma.
We already know that $A=R/I$ satisfies WLP in degrees $t\le d$ by \cite[Corollary 4]{AR}. The socle degree of $A$ is $(n+1)(d-1)$ so the range we are considering is when WLP means injectivity of multiplication on $A$. 
Assume by contradiction that $\times \ell:A_{d+k-1}\longrightarrow A_{d+k}$ is not injective and that $k$ is the least integer where this holds. By the long exact sequence \eqref{std exact} and by Remark~\ref{deggens}, this implies that there is an element $g$ in $[I_Z]_{d+k}$ which  does not lift to $I_C$ and so is not in $[I_{S_0}]_{d+k}$. Since $I_Z = I_{Z_1}\cap I_{Z_2}$, this means that we can write 
\begin{equation} \label{eqn sec 4}
g = p_0\overline{f}_0+p_1\overline{f}_1 + \sum_{i=4}^n p_i\overline{f}_i = 
p_2\overline{f}_2+p_3\overline{f}_3 + \sum_{i=4}^n p_i'\overline{f}_i.
\end{equation}
Now assume that $g'$ is an element of degree $e$ in $I_Z\setminus I_{S_0}$, where $d+k\le e<2d-k$.  Then we can write 
\[
g' = q_0\overline{f}_0+q_1\overline{f}_1 + \sum_{i=4}^n q_i\overline{f}_i = 
q_2\overline{f}_2+q_3\overline{f}_3 + \sum_{i=4}^n q_i'\overline{f}_i.
\]
We have that 
\begin{equation}\label{eq:1}
    p_0g'-q_0g = (p_0q_1-q_0p_1)\overline f_1 + \sum_{i=4}^n (p_0q_i-q_0p_i)\overline f_i
\end{equation}
which shows that $(p_0q_1-q_0p_1)\overline f_1$ belongs to $I_Z$. Since we can assume that $\overline f_1$ does not vanish on any point of $Z_2$, we conclude that $p_0q_1-q_0p_1$ vanishes on $Z_2$. Since the degree of $p_0q_1-q_0p_1$ is $k+e-d<d$ and $I_{Z_2}$ is generated in degree $d$ this forces that $p_0q_1-q_0p_1=0$. Hence we get from Equation~\ref{eq:1} that 
\[
p_0g' = q_0g + \sum_{i=4}^n (p_0q_i-q_0p_i)\overline f_i \in (g,\overline f_4,\dots,\overline f_n).
\]
In the same way we get that 
\[
p_1g' = q_1g + \sum_{i=4}^n (p_1q_i-q_1p_i)\overline f_i \in (g,\overline f_4,\dots,\overline f_n).
\]
If a general element in the vector space generated by $p_0$ and $p_1$ vanishes on a point of $Z_1$, it has to vanish on all points of $Z_1$ since the monodromy group is the full symmetric group. The same is true for $Z_2$ and we can assume that $p_0$  is non-zero on every point of $Z$. Hence also $\operatorname{gcd}(p_0,p_1)$  is non-zero on every point of $Z$.  This means that 
\[
\frac{p_0\overline f_0+p_1\overline f_1}{\operatorname{gcd}(p_0,p_1)} = \frac{g - \sum_{i=4}^n p_i\overline{f}_i }{\operatorname{gcd}(p_0,p_1)}
\]
is in the ideal $I_Z$. By the minimality of the degree of $g$, we conclude that $\operatorname{gcd}(p_0,p_1)=1$. Since we proved that $p_0q_1-q_0p_1=0$, we get that $q_0$ is divisible by $p_0$ and we conclude that 
\[
g' = \frac{p_0g'}{p_0} = \frac{q_0}{p_0}g + \sum_{i=4}^n \left(q_i-\frac{q_0}{p_0}p_i\right)\overline{f}_i \in (g,\overline f_4,\dots,\overline{f}_n). 
\]
We have now proven that 
\[
[I_Z]_e = [(g,\overline{f}_4,\dots,\overline{f}_n)]_e \quad \text{for $e<2d-k$}
\]
The reduction of $\overline{R}/(g,\overline{f}_4,\dots,\overline{f}_n)$ by a non-zero divisor does not have any socle. Hence the artinian reduction of $\overline R/I_Z$ cannot have socle in degree less than $2d-k-1$. By Remark~\ref{trans socle lemma}, $g$ lifts to $I_C$ if $d+k<2d-k-1$, which is equivalent to $2k\le d-2$ since $d$ and $k$ are integers, and hence equivalent also to $d+k\le \frac{3d}2-1$. This gives a contradiction to the initial assumption that $g$ does not lift to $I_C$ and we conclude that the multiplication by $\ell$ is injective from $A_{j-1}$ to $A_j$ when $j\le \frac{3d}2-1$ and WLP holds in this range.
This concludes the case when $d$ is even and the cases where $d$ is odd and $k< (d-1)/2$.

What remains is the case where $d$ is odd and $k=(d-1)/2$. Above we showed that every element $g'$ of $I_Z$ in degree $\leq 2d-k-1$ is in the ideal $(g,\overline{f}_4,\dots,\overline{f}_n)$.  With our current assumption note that $2d-k-1=d+k$. We conclude that there is a unique minimal generator $g  \in I_Z \subset I_{Z_1}$ in degree $d+k$. 
As in (\ref{eqn sec 4}) we write
\[
g   = p_0\overline{f}_0+p_1\overline{f}_1 + \sum_{i=4}^n p_i\overline{f}_i .
\]

\vspace{.1in}

\noindent \underline{Claim 1}:  {\it The ideal $(\overline{f}_4,\dots,\overline{f}_n,g)$ defines a complete intersection curve, and $(\overline{f}_4,\dots,\overline{f}_n,g) \subset I_{Z_2}$.}

\vspace{.1in}

\noindent The ideal $(\overline{f}_4,\dots,\overline{f}_n)$ defines a smooth irreducible surface and $g$ does not vanish completely on this surface. Thus the ideal defines a curve, and counting the number of generators we get that it is a complete intersection. This completes the proof of  Claim 1.

\vspace{.1in}

\noindent \underline{Claim 2}:  {\it The polynomial $f_1 \in I_{Z_1}$ is a regular element on  $\overline{R}/ ( \overline{f}_4,\dots,\overline{f}_n,g)$.  Hence the ideal
\[
J :=\overline{f}_1I_{Z_2} + (g,\overline{f}_4,\overline{f}_5,\dots,\overline{f}_n)
\]
is a basic double link of $I_{Z_2}$.}

\vspace{.1in}

\noindent Using the inclusion in Claim 1, the conclusion about the basic double link follows once we show that $f_1$ is a regular element. But $I_{Z_1}$ is generated in degree $d$, so the base locus of $[I_{Z_1}]_d$ is precisely  $Z_1$  scheme-theoretically. Since $f_1$ is general in $[I_{Z_2}]_d$, and the ideal is Cohen-Macaulay, Claim 2 follows.

So $J$ is a saturated ideal defining the union of $Z_2$ and the  complete intersection scheme defined by $(\overline{f}_1, \overline{f}_4, \dots. \overline{f}_n, g)$. 

\vspace{.1in}

\noindent \underline{Claim 3}: {\it The ideal $J$ is the saturated ideal of the disjoint union of three complete intersections: $Z_1, Z_2$ and a complete intersection scheme $Y$ of type $(d^{n-2},k)$, specifically $I_Y = (\overline{f}_1, \overline{f}_4, \dots. \overline{f}_n, p_0)$. }

\vspace{.1in}

The fact that $Z_1$ and $Z_2$ are disjoint has been discussed already. The fact that $J \subset I_{Z_1} \cap I_{Z_2}$ is also clear from the construction. The scheme defined by  $(\overline{f}_1, \overline{f}_4, \dots. \overline{f}_n, g)$ is a complete intersection that is disjoint from $Z_2$ by the generality of $f_1$.

Since  $(\overline{f}_1, \overline{f}_4, \dots. \overline{f}_n, g) \subset I_{Z_1}$, it provides a link of $Z_1$ with a residual scheme $Y$. We will show that $Z_1$ and $Y$ have no components in common (i.e. it is a geometric link) and that $Y$ is a complete intersection of type $(d^{n-2},k)$.
Specifically,  the link is
\[
(\overline{f}_1, \overline{f}_4, \dots. \overline{f}_n, g) : I_{Z_1} .
\]
It follows from a standard computation in liaison theory (e.g. using mapping cones) that a complete intersection of type $(d^{n-1})$ is linked inside a complete intersection of type $(d^{n-2},d+k)$ to a complete intersection of type $(d^{n-2}, k)$ (see, e.g., \cite{JM-book}).
The conclusion that the defining ideal of $Y$ is as claimed then follows:  from the above ideal quotient, together with (\ref{eqn sec 4}), we get that the defining ideal of $Y$ is $I_Y = (p_0,\overline{f}_1,\overline{f}_4,\dots,\overline{f}_n)$. 

Finally, we have to show that the above link is geometric. But we saw above  that $p_0$  is non-zero on every point of $Z$, so it is a geometric link. In other words, we have that
\[
   J =  \overline{f}_1I_{Z_2} + (\overline{f}_4,\overline{f}_5,\dots,\overline{f}_n,g) = I_{Z} \cap (p_0,\overline{f}_1,\overline{f}_4,\dots,\overline{f}_n)
\]
defines a disjoint union. We  have thus shown Claim 3

\vskip 2mm
Now we focus on the generators of $I_Z$ of degree $d+k+1$.
Assume that these generators are $g_1',g_2',\dots,g_r'$. Since $g$ does not lift, we know that  the artinian reduction of $\overline{R}/I_Z$ must have socle in degree $d+k$ (see Remark \ref{trans socle lemma}), so $r\ge 1$. Note that for each $i$, $g_i' \in I_{Z_1}$ so for all $i=1,2,\dots,r$  we
can write
\[
  g_i' = q_{i,0}\overline{f}_0+q_{i,1}\overline{f}_1 + \sum_{j=4}^n q_{i,j}\overline{f}_j,   
\]
Our argument will focus on the number, $r$, of these generators.

\vspace{.1in}

\noindent \underline{Claim 4}: {\it $r \leq 2$.}

\vspace{.1in}

Note that
 \begin{equation} \label{inclusion}
p_0(g_1',g_2',\dots,g_r')  + (\overline{f}_4,\overline{f}_5,\dots,\overline{f}_n,g)  \subseteq J.
 \end{equation}
 Both sides are homogeneous ideals, and we focus on the degree $2d$ component, recalling that $\deg p_0 = k = \frac{d-1}{2}$ and $\deg p_0 g_i' = 2d$. 
 Begin with the quotient $J/(\overline{f}_4,\overline{f}_5,\dots,\overline{f}_n,g)$. Observe that this quotient has at most two generators in degree $2d$ corresponding to the generators of $\overline{f}_1I_{Z_2} = \overline{f}_1(\overline{f}_2,\overline{f}_3)$.
 
The smaller ideal in (\ref{inclusion})  is the sum of two subideals. How big can the intersection be? If some scalar linear combination $\sum_{i=1}^r \alpha_i p_0g_i'$ were in $(\overline{f}_4,\overline{f}_5,\dots,\overline{f}_n,g)$, we would get that  $\sum_{i=1}^r \alpha_i g_i'$ was in $(\overline{f}_4,\overline{f}_5,\dots,\overline{f}_n,g)$, since $I_Z:p_0 = I_Z$ and $(\overline{f}_4,\overline{f}_5,\dots,\overline{f}_n,g)\subseteq I_Z$. This contradicts the assumption that $g_1',g_2',\dots,g_r'$ are minimal generators. 
We conclude that in degree $2d$ we have $(\overline{f}_4,\overline{f}_5,\dots,\overline{f}_n,g) \cap p_0(g_1',g_2',\dots,g_r')  = 0$.

The first observation shows that in order to fill $[J]_{2d}$ the additional contribution from $p_0 (g_1',\dots,g_r')$  can increase the dimension by at most 2. The second observation says that there is no overlap between the two subideals in degree $2d$. 
Hence we get that $r\le 2$.  This proves Claim 4.

\vspace{.1in}

\noindent \underline{Claim 5}: {\it $r= 2$ is impossible.}

\vspace{.1in}

Now we will zero in on the residual scheme $Y$. First, suppose $r=2$.  Then
 \[
    p_0(g_1',g_2')  + (\overline{f}_4,\overline{f}_5,\dots,\overline{f}_n,g) \subseteq J.
  \]
But  both ideals are generated in degrees at most $2d$ and they agree up to this degree, so we have equality: $p_0(g_1',g_2')  + (\overline{f}_4,\overline{f}_5,\dots,\overline{f}_n,g) = J.$.
Then 
 \[
 J + (p_0)  =      (\overline{f}_4,\overline{f}_5,\dots,\overline{f}_n,g) + p_0(g_1',g_2') + (p_0) =(p_0,\overline{f}_4,\overline{f}_5,\dots,\overline{f}_n,g) .
\]
Since $J$ has codimension $n-1$ in $H = {\mathbb P}^{n-1}$, the codimension of the ideal on the right-hand side must be at least $n-1$. Thus, it is a complete intersection and, in particular, a saturated ideal.  Note that $p_0$ vanishes on $Y$ and $J \subset I_Y$ so the ideal on the left is not artinian. In fact, we have seen that $p_0$ does not vanish at any point of $Z$, so this is the saturated ideal of $Y$.

In Claim 3 we showed that $I_Y = (\overline{f}_1, \overline{f}_4, \dots. \overline{f}_n, p_0)$. But their generators have different degrees, since $g$ has degree $d+k$ while $\overline{f}_1$ has degree $d$. We get a contradiction to the assumption that $g$ does not lift to the curve and we conclude that we have injectivity also in this degree.  We have proven Claim 5.

\vspace{.1in}

\noindent \underline{Claim 6}: {\it $r= 1$ is impossible.}

\vspace{.1in}

Since we assume that $g$ does not lift, there has to be socle in degree $d+k$ in the artinian reduction, $A$, of $\overline{R}/I_Z$ (see Remark \ref{trans socle lemma}). Let $m$ denote the number of first syzygies of the form $\ell_i g' + q_i g + \sum_{j=4}^n c_{i,j} \overline{f}_j$. Note every syzygy gives an equation
\begin{equation} \label{eqn socle}
q_i g + \sum_{j=4}^n c_{i,j} \overline{f}_j = - \ell_i g'. 
\end{equation}

The linear forms $\ell_1,\ell_2,\dots,\ell_m$ need to be linearly independent since $(g,\overline{f}_4,\dots,\overline{f}_n)$ is a complete intersection. In the artinian reduction, $\widetilde{R}$, of $\overline{R}/I_Z$,  the generators of degrees $d$ and $d+k$ generate a complete intersection of Krull dimension $1$, which necessarily has no socle. We consider three cases for $m$.

\begin{itemize}

\item If $m\ge n-1$, the $m$ syzygies imply that the generator of degree $d+k+1$ is a socle element in the   complete intersection $\overline{R}/(\overline{f}_4,\dots, \overline{f}_n, g)$, giving an immediate contradiction.  Hence we conclude that $m\le n-2$.

\vspace{.1in}

\item If $m=n-2$, (\ref{eqn socle}) gives that
\[
(\overline{f}_4,\overline{f}_5,\dots,\overline{f}_n,g) : g' \supseteq (\ell_1,\ldots,\ell_{n-2}). 
\]
Since $(\overline{f}_4,\overline{f}_5,\dots,\overline{f}_n,g)$ is a complete intersection, the ideal on the left-hand side is unmixed of codimension $n-2$. The ideal on the right-hand side is prime of codimension $n-2$. Thus, both ideals must be equal. 
Setting $\fb = (\overline{f}_4,\dots,\overline{f}_n, g, g')$, multiplication by $g'$ gives an exact sequence
\[
0 \to \overline{R}/(\ell_1,\ldots,\ell_{n-2}) (- d-k-1) \to \overline{R}/(\overline{f}_4,\overline{f}_5,\dots,\overline{f}_n,g) \to \overline{R}/\fb \to 0.
\]
It yields
\begin{align*}
  \label{eq:compare deg} 
\deg \fb = -1 + \deg (\overline{f}_4,\dots,\overline{f}_n, g).  
\end{align*}

Let us assume that the artinian reduction $\widetilde{R}$ is given by the quotient of $\overline{R}/I_Z$ by a general linear form $\ell' \in \overline{R}$. Denote the image of $g', \overline{f}_i$  in $\widetilde{R}$ by $\widetilde{g'}, \widetilde{f}_i$  and similarly for other elements.  
The above equality implies  that that $\widetilde{g'}$ passes through a subscheme of the complete intersection scheme defined by 
$(\widetilde{f}_4,\dots,\widetilde{f}_n, \widetilde{g})$ and that subscheme has degree $-1 + \deg (\widetilde{f}_4,\dots,\widetilde{f}_n, \widetilde{g})$. 
Hence, by the Cayley–Bacharach theorem (see \cite[Theorem 1.1]{K}), $\widetilde{g'}$ has to be in that complete intersection ideal.
But this forces $g' \in (\overline{f}_4,\dots,\overline{f}_n, g)$,  contradicting the assumption that $g'$ is a minimal generator of $I_Z$. Thus, $m = n-2$ is impossible.  

\vspace{.1in}

\item Assume $m<n-2$. 
The linear part of the resolution of the $m$ first syzygies is a subcomplex of the Koszul complex on the linear forms $\ell_1,\ell_2,\dots,\ell_m$. If $m<n-2$, this Koszul complex does not reach homological degree $n-1$, so there cannot be any socle in degree $d+k$ in the artinan reduction. 

\end{itemize}

We conclude that $r=1$ is also impossible, completing the proof of Claim 6 and of our theorem.  
\end{proof}

\section{Applications} \label{sect: applications}

 This last section is entirely devoted to applications of our main result (Theorem \ref{arb num var})  For this section we assume further that the field $k$ is algebraically closed.

 \subsection{Jacobian ideals.} 
Let  $ X \subset \PP^n$ be a smooth hypersurface of degree $d$  defined by a
homogeneous polynomial $f$. The Jacobian ideal $J(X)$ is the homogeneous Artinian 
ideal generated by the partial derivatives $\partial f/\partial x_i$, $0 \le  i \le  n$. Since $X$ is smooth, $J(X)$ is a complete intersection Artinian ideal 
generated by $n+1$ forms of degree $d-1$.
In \cite{I}, G. Ilardi posed the following question:

\begin{question} 
Does the Jacobian ring $R/J(X)$ of a smooth hypersurface have the WLP?
\end{question} 

So far, as mentioned in the introduction, very little is known.

As a consequence of  Theorem \ref{arb num var} we  get an improvement of Beorchia and Miró-Roig's result in the case of arbitrary number of 
variables \cite[Theorem 4.3]{B-MR} :

\begin{corollary}
 Let  $ X \subset \PP^n$ be a smooth hypersurface of degree $d\ge 2$  defined by a
homogeneous polynomial $f$. The Jacobian ring $R/J(X)$ has the WLP in all degrees $t\le\frac{3d}2-2$.    
\end{corollary}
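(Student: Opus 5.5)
The plan is to deduce the corollary directly from Theorem~\ref{arb num var}, applied to the Jacobian ideal.

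Since $X=V(f)$ is smooth, the partials $\partial f/\partial x_0,\dots,\partial f/\partial x_n$ have no common zero in $\PP^n$. Hence $J(X)$ is an artinian complete intersection generated by $n+1$ forms of the same degree $d-1$. Note that this uses characteristic zero, which gives Euler's formula so that $f\in J(X)$.

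For $n\ge 4$ and $d-1\ge 2$, Theorem~\ref{arb num var}, applied with generator degree $d-1$, gives the WLP in all degrees
\[
t\le \frac{3(d-1)-1}{2}=\frac{3d}{2}-2,
\]
which is exactly the claimed range.

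It remains to check the boundary cases that Theorem~\ref{arb num var} does not cover.
\begin{itemize}
\item If $d=2$, then $J(X)$ is generated by linear forms, so $R/J(X)=k$. The statement is vacuous, since $t\le 1$ and $[R/J(X)]_1=0$.
\item If $n\le 2$, the result holds by \cite{HMNW}, since all complete intersections in at most three variables have the WLP. The case $n=1$ is trivial.
\item If $n=3$, use Theorem~\ref{d1=d2=d3=d4-intro} with generator degree $d-1$. It gives the WLP for $t<\lfloor\frac{3(d-1)+1}{2}\rfloor=\lfloor\frac{3d}{2}\rfloor-1$. One checks that every integer $t\le\frac{3d}{2}-2$ satisfies $t\le\lfloor\frac{3d}{2}\rfloor-2$, so it lies in that range. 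Alternatively, one may simply quote \cite[Corollary 7.3]{BMMN-codim4}.
\end{itemize}

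The only point requiring care is this bookkeeping of the excluded cases ($n\le 3$ and $d=2$) and the floor arithmetic in the $n=3$ case. There is no real obstacle, since the substance lies in Theorem~\ref{arb num var}.
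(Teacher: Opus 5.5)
Your proposal is correct and matches the paper, which presents the corollary as an immediate consequence of Theorem~\ref{arb num var} applied to the $n+1$ partials of degree $d-1$, giving $t\le\frac{3(d-1)-1}{2}=\frac{3d}{2}-2$. Your extra treatment of $n\le 3$ (via \cite{HMNW} and Theorem~\ref{d1=d2=d3=d4-intro}) and of $d=2$ goes beyond what the paper spells out and is fine, except that for $d=2$ the statement is not vacuous: it holds because $\times\ell\colon [R/J(X)]_0\to[R/J(X)]_1=0$ is surjective, hence of maximal rank.
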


\subsection{Maximal variation of hyperplane sections.}  

Following ideas recently developed by Beauville in \cite{beauville} and \cite{beauville2}  we  apply our results on the WLP for equigenerated Artinian complete intersection ideals to the study of the maximal variation problem for the hyperplane sections of a smooth hypersurface $X\subset \PP^n$, $n\ge 3$, of degree $d\ge 2$.

Understanding how hyperplane sections vary in moduli is a classical problem in algebraic geometry closely related to deformation theory. Let $X\subset \PP^n$ be a smooth hypersurface of degree $d$, let $(\PP^n)^*$ denote  the dual projective space, and let $X^*$ be the dual of $X$. We denote by $\mathcal{M}_d$ the moduli space of hypersurfaces of degree $d$ in $\PP^{n-1}$. 

A fundamental  approach for studying  $X$ is to intersect it with hyperplanes  and look at the family of  varieties  thus obtained. This leads to  the morphism
$$
s_X: (\PP^n)^*\setminus X^*\longrightarrow \mathcal{M}_d\cong \PP(H^0(\mathcal{O} _{\PP^{n-1}}(d)))/SL_n \quad \quad H\mapsto X\cap H.$$

The basic properties of $s_X$ are still far from being fully understood. In particular,  the problem of determining  the dimension of its image, known as the  {\em  maximal variation problem for the hyperplane sections}, remains open in general. For plane curves, however, the situation is better understood. Let $C\subset \PP^2$ a plane curve. If $s_C$ fails to have maximal rank then $C$ has infinitely many projective automorphisms \cite[Theorem 1.2]{PRT}. 

From now on we assume $n\ge 3$. In \cite{beauville}, Beauville established a connection between the maximal variation for  hyperplane sections  of a smooth hypersurface $X\subset \PP^n$  and the WLP of its Jacobian ring, $R/J(X)$. 
 As an immediate application of Theorem \ref{arb num var} and using Beauville's criterion (Proposition \ref{criterion}),  we obtain a complete solution to the maximal variation problem for  hyperplane sections  of smooth hypersurfaces of degree $d\ne 3$.

\begin{corollary}
Let \(X \subset \mathbb{P}^n\) be a smooth hypersurface of degree \(d\). Then the hyperplane sections of \(X\) have maximal variation whenever \(d \geq 4\).  
\end{corollary}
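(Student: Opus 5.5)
This follows directly from Beauville's criterion (Proposition~\ref{criterion}) combined with Theorem~\ref{arb num var}, applied to the Jacobian ideal of $X$. Since $X$ is smooth of degree $d$, $J(X)$ is an artinian complete intersection generated by $n+1$ forms of degree $d-1$. By Proposition~\ref{criterion}, which applies because $n\ge 3$, $d\ge 4$ (so also $d\ge 4$ when $n=3$), maximal variation of the hyperplane sections is equivalent to $R/J(X)$ satisfying the WLP in degree $d$. So it suffices to check that degree $d$ lies in the range covered by our injectivity results, and that in this range the WLP really means injectivity.

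First consider $n\ge 4$. Apply Theorem~\ref{arb num var} with generator degree $d' = d-1 \ge 3$. It gives the WLP in all degrees $t \le \frac{3d'-1}{2} = \frac{3d-4}{2}$. We need $d \le \frac{3d-4}{2}$, which is equivalent to $d\ge 4$, exactly our hypothesis. This is the same as the preceding corollary on Jacobian rings, whose bound is $\frac{3d}{2}-2$.

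Next consider $n=3$, where Theorem~\ref{arb num var} does not apply. Here we use instead Theorem~\ref{d1=d2=d3=d4-intro} with generator degree $d-1$. It gives the WLP in degrees $t<\lfloor \frac{3(d-1)+1}{2}\rfloor = \lfloor\frac{3d-2}{2}\rfloor$, and additionally the full WLP when $d-1\in\{3,4,5,6\}$. The condition $d<\lfloor\frac{3d-2}{2}\rfloor$ holds for $d\ge 5$: for $d=5$ it reads $5<6$, and the gap grows with $d$. For $d=4$ we have $d-1=3$, so the full WLP holds. Alternatively, this case is the already-cited result of \cite{BMMN-codim4}.

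The only delicate point is bookkeeping. The socle degree of $R/J(X)$ is $(n+1)(d-2)$, and degree $d$ is at most half of it, since $2d \le (n+1)(d-2)$ for $n\ge 3$, $d\ge 4$. Hence in degree $d$ the maximal-rank condition is indeed injectivity, which is what Theorem~\ref{arb num var} provides. Beyond matching the criterion's hypotheses and handling $n=3$ separately, there is no real obstacle.
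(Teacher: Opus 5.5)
Your proof is correct and follows the paper's argument: apply Theorem~\ref{arb num var} to $J(X)$ with generator degree $d-1$, observe that $d\le \frac{3d-4}{2}$ exactly when $d\ge 4$, and invoke Proposition~\ref{criterion}. You are more careful than the paper in treating $n=3$ separately via Theorem~\ref{d1=d2=d3=d4-intro}, since Theorem~\ref{arb num var} needs $n\ge 4$; one small point is that $d<\lfloor\frac{3d-2}{2}\rfloor$ already holds at $d=4$ (as $4<5$), so you do not need the full WLP for cubics there.
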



\begin{thebibliography}{99}

\bibitem{AR} A. Alzati and R. Re, {\it Complete intersections of quarics and the Weak Lefschetz Property},  Collect. Math. 70 (2019), no. 2, 283--294.

\bibitem{ACGH} E. Arbarello, M. Cornalba, P. Griffiths and J. Harris, ``Geometry of Algebraic Curves, volume~1," Grundlehren der mathematischen Wissenschaften 267 (1985), Springer.

\bibitem{beauville} A. Beauville, {\it Hyperplane sections of cubic threefolds}, Proc. Amer. Math. Soc. 153 (2025), 5167--5170.

\bibitem{beauville2} A. Beauville, {\it Maximal variation of linear systems}, Selecta Mathematica (to appear); available at arXiv 2511.22329.

\bibitem{B-MR} V. Beorchia and R.M. Mir\'o-Roig, {\it Weak Lefschetz property of equigenerated complete intersections. Applications}, Proc. Amer. Math. Soc. 154 (2026), 905--912.

\bibitem{BMMN-codim4} M. Boij, J. Migliore, R.M. Mir\'o-Roig and U. Nagel, {\it On the weak Lefschetz property for height 4 equigenerated complete intersections}, Trans. Amer. Math. Soc., Series B, Vol. 10 (2023), 1254--1286.

\bibitem{BM4} G. Bolondi and J. Migliore, {\it The structure of an even liaison class}, Trans. Amer. Math. Soc. 316 (1989), 1--37.

\bibitem{B-P} D. Bricalli and G. P. Pirola, {\it On the maximal variation problem and Lefschetz pencils}, Preprint arXiv 2606.27893

\bibitem{POLITUS} L. Chiantini, \L. Farnik, G. Favacchio, B. Harbourne, J. Migliore, T. Szemberg and J. Szpond, {\it Intersection of curves in $\PP^4$}, in preparation.

\bibitem{EHbook} D. Eisenbud and J. Harris, ``The practice of algebraic curves  -- A Second Course in Algebraic Geometry,"  Graduate Studies in Mathematics 250 (2024), American Mathematical Society.

\bibitem{HMNW} T.~Harima, J.~C. Migliore, U.~Nagel, and J.~Watanabe. \newblock {\it The weak and strong {L}efschetz properties for {A}rtinian {$K$}-algebras.}  \newblock { J. Algebra}, 262(1):99--126, 2003.

\bibitem{HarrisDuke} J. Harris, {\it Galois groups of ennumerative problems}, Duke Mathematical Journal 46 (1979), 685--724.

\bibitem{harris} J. Harris, {\it The genus of space curves}, Math. Ann. 249 (1980), 191--204.

\bibitem{HU} C.~Huneke and B.~Ulrich.  \newblock {\it  General hyperplane sections of algebraic varieties.}  \newblock  J. Algebraic Geom.  2(3):487--505, 1993.

\bibitem{I} G. Ilardi. \newblock {\it Jacobian ideals, arrangements and the Lefschetz properties.} \newblock { J. Algebra}, 508:418--430, 2018.

\bibitem{kleiman} S.~L. Kleiman.  \newblock {\it Bertini and his two fundamental theorems.}  \newblock { Rend. Circ. Mat. Palermo (2) Suppl.}, (55):9--37, 1998.  \newblock Studies in the history of modern mathematics, III.

\bibitem{KMMNP} J. Kleppe, J. Migliore, R. Mir\'o-Roig, U. Nagel and C. Peterson, ``Gorenstein liaison, complete intersection liaison invariants and unobstructedness," Memoirs Amer. Math. Soc. {\bf 154} (2001), no.\ 732. 

\bibitem{K} M. Kreuzer, {\em On 0-dimensional complete intersections}, Math. Ann. {\bf 292} (1992), 43--58. 

\bibitem{LR} R. Lazarsfeld and A.P. Rao, {\em Linkage of general curves of large degree}," Algebraic geometry -- open problems (Ravello, 1982), 267--289, 
Lecture Notes in Math., {\bf 997}, Springer, Berlin, 1983. 

\bibitem{JM-book} J. Migliore,  ``Introduction to liaison theory and deficiency modules", Birkh\"{a}user, Progress in Mathematics 165, 1998.

\bibitem{PRT} A.~P. Patel, E. Riedl and D. Tseng, {\em Moduli of linear slices of high degree smooth hypersurfaces}, Algebra Number Theory {\bf 18} (2024), no.~12, 2133--2156. 


\bibitem{Rathmann} J. Rathmann, {\it The Uniform Position Principle for Curves in Characteristic $p$},  Math. Ann. 276 (1987) 565--579.

\bibitem{RRR} L.~Reid, L.~G. Roberts, and M.~Roitman.  \newblock {\it On complete intersections and their {H}ilbert functions}.  \newblock{ Canad. Math. Bull.}, 34(4):525--535, 1991.

\bibitem{S} R. P. Stanley, {\em Weyl groups, the hard Lefschetz theorem, and the Sperner property}. SIAM
J. Algebraic Discrete Methods, {\bf 1} (1980), 168–184.

\bibitem{W} J. Watanabe, {\em The Dilworth number of Artinian rings and finite posets with rank function.}
In Commutative algebra and combinatorics (Kyoto, 1985), volume 11 of Adv. Stud. Pure Math., pages
303–312. North-Holland, Amsterdam, 1987.

 \end{thebibliography}
\end{document}